\newtheorem{theorem}{Theorem}[section]
\newtheorem*{theorem*}{Theorem}
\newtheorem{definition}[theorem]{Definition}
\newtheorem{proposition}[theorem]{Proposition}
\newtheorem{corollary}[theorem]{Corollary}
\newcommand{\EuNum}[2]{\biggl< \begin{matrix} #1\\ #2\end{matrix} \biggr>}
\newcommand{\Sym}{{\mathfrak S}}
\newcommand{\SYT}{{\rm SYT}}
\begin{document}

\title{Block characters
 of the symmetric groups\footnote{2010 Mathematics Subject Classification:
 20C30, 
 20C32, 
 05E10, 
 }}

\author{Alexander Gnedin
\thanks{School of Mathematical Sciences,
Queen Mary University of London, Mile End Road, London, E1 4NS, UK, e-mail: a.gnedin@qmul.ac.uk} ,
\and Vadim Gorin\thanks{Institute for Information Transmission Problems, Bolshoy Karetny 19,
Moscow 127994, Russia; Massachusetts Institute of Technology, 77 Massachusetts Avenue, Cambridge,
MA, 02139, USA, e-mail: vadicgor@gmail.com}
 \and and~~~Sergei Kerov\thanks{
 The work on the paper  started in 1999 as Sergei Kerov (1946-2000) was visiting the first author in the University of G{\"o}ttingen.}}

\date{July 31, 2012}

\maketitle

\begin{abstract}
\noindent Block character of a finite symmetric group is a  positive definite function which
depends only on the number of cycles in  permutation. We describe the cone of block characters by
identifying its extreme rays, and find relations of the characters  to descent representations and
the coinvariant algebra of $\Sym_n$. The decomposition of extreme block characters into the sum of
characters of irreducible representations gives rise to certain limit shape theorems for random
Young diagrams. We also study counterparts of the block characters for the infinite symmetric
group $\Sym_\infty$,
along with their connection to the Thoma characters of the infinite linear
group $GL_\infty(q)$ over a Galois field.
\end{abstract}

{\bf Keywords } Symmetric group, Characters, Coinvariant algebra, Limit shape

\section{Introduction}

Let $G$ be a group. Under a {\it character} of $G$
we shall understand a 
positive-definite class function $\chi: G \to{\mathbb C}$. A character which satisfies
$\chi(e)=1$ will be  called {\it normalized}.

For $g\mapsto R_g$  a finite-dimensional matrix representation of $G$, the  trace
$$
 \chi(g)={\rm Trace}\,(R_g)
$$
is a character. For a finite or, more generally, compact group the set of normalized characters is
a simplex whose extreme points are normalized traces  $g\mapsto \chi(g)/\chi(e)$ of irreducible
representations of $G$. For infinite groups the connection is more delicate, since there are many
infinite-dimensional representations and the matrix traces are of no use. A classical construction
associates \emph{extreme} normalized characters with 
factor representations of finite von Neumann type, see \cite{Dix,Thoma2}. Yet another approach
exploits spherical representations of the Gelfand pairs, see \cite{Olsh1, Olsh2}. The
representation theory is mainly focused on the classification of extreme characters and the
decomposition of the generic character in a convex sum of the extremes, the latter being as a
counterpart of the decomposition of a representation into the irreducible ones. However, the
extreme characters may be complicated functions and the set of the extremes may be too large, so
it is of interest to study  smaller tractable families of reducible characters, for instance those
which have some kind of symmetry or depend on  some simple statistic on the group.

In this paper we  study symmetric groups $\Sym_n$ and their {\it block} characters, which depend
on permutation $g\in\Sym_n$ only through the number of   cycles $\ell_n(g)$. Our interest to
 block characters is motivated by the analogous concept  of \emph{derangement} characters of the general linear
group $GL_n(q)$ of invertible matrices over a Galois field, as studied in \cite{Derangement}. A
derangement character depends on the matrix $h\in GL_n(q)$ only through the dimension of the space
$\ker\,(h-Id)$ of fixed vectors of $h$. See \cite{ThomaGL, Skudlarek} for a connection of the
derangement characters to representation theory of $GL_\infty(q)$ and \cite{Fulman} for a
connection of these characters to some random walks. The natural embedding $\Sym_n$ in $GL_n(q)$,
which maps  permutation $g$ to a permutation matrix $h$, yields a link between two families of
characters. Indeed, it is easily seen that $\ell_n(g)$ is equal to  the dimension of the space of
fixed vectors of $h$, so the restriction of a  derangement character from  $GL_n(q)$  to $\Sym_n$
is a block character.

The convex set of normalized block characters of $\Sym_n$
is a simplex whose extreme points are 
 normalized versions of the characters $\tau_1^n,\dots,\tau^n_n$ introduced by Foulkes \cite{Foulkes}.
It should be stressed that, a priori, there is no general reason for  the set of normalized block
characters to be a simplex. To compare, the set of normalized derangement characters of $GL_n(q)$
is a simplex  for some $n$, and not a simplex for other \cite{Derangement}. Characters $\tau_k^n$
are related to the descent statistics of permutations. In particular,  $\tau_k^n(e)$ coincides
with  the Eulerian number, which counts permutations with $k-1$ descents. Using a decomposition of
the {\it coinvariant algebra} we will  find representations of $\Sym_n$ whose traces are the
$\tau_k^n$'s. Versions of decompositions of the coinvariant algebra is a classical topic (see
\cite[Prop. 4.11]{St79}, \cite{KW}, \cite[Section 8.3]{Reut}) which has been studied recently in
\cite{ABR} in connection with \emph{descent representations} \cite{Solomon}.

Foulkes  \cite{Foulkes} defined the $\tau^n_k$'s by summing `rim hook' characters which are not
block functions at all. In this paper we take a more straightforward approach, starting with a
collection of block characters associated with a natural action of $\Sym_n$ on words. On this way
we derive `from scratch' a number of known results on decomposition and branching of  the Foulkes
characters, as found in  \cite{DF, KerberT}.

Extending the finite-$n$ case we shall consider the infinite symmetric group $\Sym_\infty$
 of bijections $g:{\mathbb N}\to{\mathbb N}$ satisfying $g(j)=j$ for all
sufficiently large $j$.
The counterparts of the block characters of $\Sym_\infty$ are the characters depending on the
permutation through its {\it decrement} defined by $c(g):=n-\ell_n(g)$, with any large enough $n$.
We will show that the set of normalized block characters of $\Sym_\infty$ is a Choquet simplex
with extreme points $\sigma_z^\infty(g):=z^{c(g)}$ where $z\in {\mathbb V}=\{0,\pm 1,\pm 1/2, \pm
1/3,\dots\}$ (the instance $z=0$ is understood as the delta function at $e$). Recall that the
characteristic property of a Choquet simplex is the uniqueness of decomposition of the generic
point in a convex mixture of extremes \cite{Goodearl}.

The extreme normalized characters of $\Sym_\infty$ were parameterized   in a  seminal paper  by
Thoma \cite{Thoma1} (see also \cite{VK, KOO, Ok})  via two infinite sequences
\begin{equation}
\label{eq_thoma_simplex}
 \alpha_1\ge\alpha_2\ge\dots \ge 0,\quad \beta_1\ge\beta_2\ge\dots \ge 0,\quad
 \sum_i(\alpha_i+\beta_i) \le 1.
\end{equation}
It turns that the extreme normalized block characters of $\Sym_\infty$ are  extreme among all normalized characters,
with $\sigma_{1/k}^\infty$ corresponding to the parameters
$\alpha_1=\alpha_2=\dots=\alpha_k=1/k$ and
$\sigma_{-1/k}^\infty$ to the parameters
$\beta_1=\beta_2=\dots=\beta_k=1/k$ (where $k>0$). The character $\sigma_0^\infty$ has
$\alpha_j\equiv\beta_j\equiv 0$ and corresponds to the regular representation of $\Sym_\infty$.

The block characters also possess an additional extremal property. Observe that the set
\eqref{eq_thoma_simplex} of parameters of extreme normalized characters of $\Sym_\infty$  is a
simplex by itself. The block characters $\sigma_z^\infty(g)$ correspond precisely to all extreme
points of this simplex.

Every normalized character $\chi^n$ of the symmetric group $\Sym_n$ defines a probability measure on
the set $\mathbb Y_n$ of Young diagrams with $n$ boxes. Indeed, recall that irreducible
representations of $\Sym_n$ are parameterized by the elements of $\mathbb Y_n$ and decompose
$\chi^n$ into the linear combination of their (conventional) characters $\chi^\lambda$:
$$
 \chi^n(\cdot)=\sum_{\lambda\in \mathbb Y_n} p^n(\lambda) \frac{\chi^\lambda(\cdot)}{\chi^\lambda(e)}.
$$
The numbers $p^n(\lambda)$ are non-negative and sum up to $1$, thus, they define a probability
distribution on $\mathbb Y_n$ or \emph{random Young diagram} $Y^{\chi^n}$. As $n\to\infty$ these
random Young diagrams may posses intriguing properties depending on the sequence of characters
$\chi_n$.

Kerov, Vershik \cite{VK_plancherel} and, independently, Logan, Shepp \cite{LS} proved in 70s that
if we choose $\chi^n$ to be the character of the regular representation of $\Sym_n$ then after the
proper rescaling the boundary of the Young diagram $Y^{\chi^n}$ converges to a deterministic
smooth curve called the \emph{limit shape}. We will prove a similar result for the extreme block
characters $\tau_k^n$. More precisely, if $n\to\infty$ and $k\sim c\sqrt{n}$  then the (rescaled)
boundary of the Young diagram $Y^{\tau_k^n}$ converges to the deterministic limit shape depending
on $c$; similar result holds if $n-k\sim c\sqrt{n}$. Our limit shapes are the same as those
obtained by Biane \cite{Biane} in the context of tensor representations of $\Sym_n$. This fact
could have been predicted since the characters considered by Biane coincide with restrictions of
$\sigma_z^\infty$ on finite symmetric groups $\Sym_n$ and extreme block characters
$\tau_k^n$ approximate $\sigma_z^\infty$ as $n$ tends to infinity.

From another probabilistic viewpoint the characters $\sigma_{1/k}^\infty$ ($k>0$)
 have been studied in \cite{J} and  \cite[Section III.3]{Kerov_book}; it was shown that the probability measures on the set of Young
diagrams which they define are related to the distributions of eigenvalues of random matrices.


Like `supercharacters' of Diaconis and Isaacs \cite{DiacIs},
the block characters are constant
on big blocks of conjugacy classes (`superclasses'). The latter feature motivated our choice of the name for this family of functions.
However, the block characters do not fit in the theory of `supercharacters', since the extremes $\tau_k^n$ (hence their mixtures) are not
disjoint in their decomposition over the irreducible traces.
The same distinction applies to the derangement characters of finite linear groups as well.


The rest of the paper is organized as follows. In Section \ref{Section_block} we introduce
families of block characters of $\Sym_n$ and derive their properties. In Section
\ref{Section_simplex_finite_n} we prove that the set of normalized block characters of $\Sym_n$ is
a simplex and identify its extreme points (the normalized Foulkes characters). In Section \ref{Section_coinvariant} we study block
characters $\tau_k^n$ and their relation with the coinvariant algebra of $\Sym_n$. In Section
\ref{Section_limit_shapes} we prove the limit shape theorem for extreme block characters of
$\Sym_n$. In Section \ref{Section_branching} we derive the branching rule for the characters
$\tau_k^n$'s as $n$ varies. In Section \ref{Section_simplex_infinite_n} we prove that the set of
normalized block characters of $\Sym_\infty$ is a simplex and identify its extreme points.
Finally, in Section \ref{Section_GLnq} we comment on the relation between the block characters and
the derangement characters of $GL_n(q)$ for $n\leq \infty$.


\section{The block characters}\label{Section_block}
Let $\Sym_n$ denote the group of permutations of $\{1,\dots,n\}$ and let $\ell_n(g)$ be the number
of cycles of permutation $g\in \Sym_n$. A {\it block function} on $\Sym_n$ is a function which
depends on $g$ only through $\ell_n(g)$.

In general, a \emph{character} of a group $G$ is a complex-valued function $\chi$ on $G$  which is
\begin{enumerate}
\item central, i.e. a class function:  $\chi(a^{-1}ba)=\chi(b)$,
\item positive definite, i.e.\ for any finite collection $(g_i)$ of elements of $G$ the matrix with entries
$\chi(g_i g_j^{-1})$ is a
Hermitian non-negative definite matrix.
\end{enumerate}
If $g\mapsto R_g$ is a finite-dimensional matrix representation of a group $G$, then its trace $ \chi(g)={\rm Trace}\,(R_g)$
is a character.

A \emph{block character} of $\Sym_n$ is a positive-definite block function.
A conjugacy class  of $g\in\Sym_n$ is determined by the partition of $n$ into parts equal to the cycle-sizes of $g$,
hence every block
function is central, and every block character is indeed a character.

%


Our starting point is an elementary construction of a family of block characters. Fix an integer
$k>0$ and let $A^n_k$ be the set of all words of length $n$ in the alphabet $\{1,2,\dots,k\}$. The
group $\Sym_n$ naturally acts in $A^n_k$ by permuting {\it positions} of letters, and this action
defines a unitary representation of the group in ${\cal L}_2(A^n_k)$ by the formula.
$$
 (gF)(x)=F(g^{-1}x),\quad F\in {\cal L}_2(A^n_k),\, g\in \Sym_n.
$$
 Let $R^n_k$ denote
this representation and let $\sigma^n_k$ be its character. Furthermore, let $\widehat S^n$ be the
one-dimensional sign representation of $\Sym_n$, which has the block character $g\mapsto
(-1)^{n-\ell_n(g)}$, and  let $\widehat R^n_k$ be the tensor product of the representations
$\widehat S^n$ and $R^n_k$:
$$
 \widehat R^n_k =\widehat S^n_1 \otimes R^n_k.
$$
Let $\widehat \sigma_k^n$ be the matrix trace of $\widehat R^n_k$.

\begin{proposition}
 The functions $\sigma^n_k$ and $\widehat \sigma^n_k$ are block characters of $\Sym_n$.
Explicitly,
 $$
  \sigma^n_k(g)=k^{\ell_n(g)},
 $$
 $$
  \widehat \sigma^n_k(g)=(-1)^n(-k)^{\ell_n(g)}.
 $$
\end{proposition}
\begin{proof} The trace of $R_k^n(g)$ is equal to the number of
words in $A_n^k$ fixed by $g$. A word is fixed if within each  set of positions comprising a cycle
the letters occupying these positions are the same, whence the first formula. The second formula
follows from the multiplication rule for traces of tensor products.
\end{proof}


The possible values of the function $\ell_n$ on $\Sym_n$ are integers $1,\dots,n$.
On the other hand, the determinant of the matrix $(k^\ell)_{k,\ell\in\{1,\dots,n\}}$ is a nonzero
Vandermonde determinant,  therefore $n$ characters  $\sigma^n_1,\dots, \sigma^n_n$ comprise a
basis of the linear space of  block functions.

We are mostly interested in the \emph{extreme rays} of the cone of block characters. Now we define
another family of block characters which (as we will see later) generate these rays.


\begin{definition}
  \label{eq_def_tau}
 For $k=1,\dots, n$  define
 \begin{equation}
  \tau_k^n:=\sum_{j=0}^{k-1} (-1)^j {{n+1} \choose {j}} \sigma^n_{k-j}.
 \end{equation}
\end{definition}
\noindent
Clearly,
 $\tau_k^n$ is a block function, although it is not obvious whether it is a character. The formula
\eqref{eq_def_tau} can be inverted as follows:

\begin{proposition}\label{sigma_tau}
 We have
\begin{equation}
\label{eq_decomp_of_sigma}
 \sigma_k^n=\sum_{j=0}^{k-1} {n+j\choose j} \tau ^n_{k-j}.
\end{equation}
\end{proposition}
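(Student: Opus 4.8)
The plan is to recognize both the defining formula for $\tau^n_k$ (Definition~\ref{eq_def_tau}) and the claimed relation \eqref{eq_decomp_of_sigma} as lower-triangular, constant-along-diagonals (Toeplitz) linear transformations between the sequences $(\sigma^n_k)$ and $(\tau^n_k)$, and to show the two transformations are mutually inverse by a single generating-function computation. Concretely, set $a_j:=(-1)^j\binom{n+1}{j}$ and $b_j:=\binom{n+j}{j}$, and extend $\sigma^n_m=\tau^n_m=0$ for $m\le 0$. Then the definition reads $\tau^n_k=\sum_{j\ge 0} a_j\,\sigma^n_{k-j}$ while the assertion \eqref{eq_decomp_of_sigma} reads $\sigma^n_k=\sum_{j\ge 0} b_j\,\tau^n_{k-j}$. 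Substituting the first into the candidate second, it suffices to prove that $a$ and $b$ are inverse to each other under convolution, that is
\begin{equation}
  \sum_{i=0}^{m}(-1)^i\binom{n+1}{i}\binom{n+m-i}{m-i}=\delta_{m,0},\qquad m\ge 0 .
\end{equation}

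First I would record the two generating functions
$$
  \sum_{j\ge 0} a_j x^j=(1-x)^{n+1},\qquad \sum_{j\ge 0} b_j x^j=(1-x)^{-(n+1)},
$$
the former being the binomial theorem and the latter the standard negative-binomial expansion. Their product is identically $1$, which is exactly the convolution identity above; reading off the coefficient of $x^m$ yields the claim for every $m\ge 0$.

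To finish, I would phrase the inversion at the level of the (infinite) lower-triangular Toeplitz matrices $T=(a_{k-\ell})$ and $S=(b_{k-\ell})$, both with diagonal entries $a_0=b_0=1$. Since $T$ and $S$ are Toeplitz and lower-triangular, the product $ST$ is again lower-triangular Toeplitz, generated by the convolution $a*b$, which equals the identity by the previous step. As the definition says $(\tau^n_k)=T(\sigma^n_k)$, applying $S$ on the left gives $(\sigma^n_k)=S(\tau^n_k)$, which is precisely \eqref{eq_decomp_of_sigma}. The only point requiring care is the bookkeeping of summation ranges: because $\sigma^n_m$ and $\tau^n_m$ vanish for $m\le 0$, every sum truncates exactly at the stated limits and no boundary terms are created. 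I do not expect a genuine obstacle here---the statement is an elementary inversion of a triangular system, and all of its content sits in the reciprocity $(1-x)^{n+1}(1-x)^{-(n+1)}=1$ of the two generating functions.
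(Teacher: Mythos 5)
Your proposal is correct and follows essentially the same route as the paper: the authors also reduce the claim to the convolution identity $\sum_{j}(-1)^{m-j}\binom{\alpha+j-1}{j}\binom{\alpha}{m-j}=1(m=0)$ (with $\alpha=n+1$) and verify it by expanding $(1-x)^{-\alpha}(1-x)^{\alpha}=1$ and comparing coefficients. Your extra remarks on Toeplitz matrices and the truncation of summation ranges are just a more explicit phrasing of the same triangular-inversion argument.
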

\begin{proof} The inversion formula  is
equivalent to the identity (for $\alpha>0$)
$$\sum_{j=0}^m (-1)^{m-j} {\alpha+j-1\choose j} {\alpha\choose m-j}=1(m=0)$$
(where and henceforth $1(\cdots)$ is 1 when $\cdots$ is true and 0 otherwise).
The identity is derived by substituting the generating
function
$$(1+x)^{-\alpha}=\sum_{j=0}^\infty (-1)^j {\alpha+j-1\choose j}x^j $$
in  $(1-x)^{-\alpha}(1-x)^\alpha-1=0$ and equating the coefficients to 0.
\end{proof}

Our next aim is to decompose $\sigma_k^n$ and $\tau_k^n$ into linear combinations of irreducible
characters of $\Sym_n$. Let us introduce some notations first.

 A \emph{partition} of $n$ is a finite
nondecreasing sequence $\lambda=(\lambda_1,\lambda_2,\dots,\lambda_\ell)$ of positive integers
such that $|\lambda|:=\sum \lambda_i=n$. We identify partition $\lambda$ with its \emph{Young diagram} defined as
the set ${\{(i,j)\in\mathbb{Z}^2_+: 1\le j\le \lambda_i\}}$. We call an element
$x=(i,j)\in\lambda$ a \emph{box}, and draw it as a unit square at location $(i,j)$ (with the
English convention that $(1,1)$ is at the top left and the first coordinate is vertical). Let
${\mathbb Y}_n$ denote the set of all partitions of $n$ and let ${\mathbb Y}_n^k$ denote the set
of all partitions of $n$ with at most $k$ non-zero parts (i.e.\ such that $\ell\le k$).

It is well-known that irreducible representations of $\Sym_n$ are enumerated by the elements of
${\mathbb Y}_n$. For $\lambda\in {\mathbb Y}_n$ we denote $V^\lambda$ and $\chi^\lambda$ the
irreducible representation corresponding to $\lambda$ and the character (matrix trace) of this
representation, respectively.

A \emph{Young tableau} $T$ of shape $\lambda$ is a map assigning to boxes of the Young diagram $\lambda$ positive
integer entries  which are non-decreasing along rows and columns. We denote $T(x)$ the entry
assigned to box $x$. A Young tableau is \emph{semistandard} if the entries strictly increase along
the columns. A Young tableau
$T$ of shape $\lambda$ is  \emph{standard} if the set
of entries  of $T$ is $\{1,2\dots,|\lambda|\}$. For  standard Young tableau $T$ of shape
$\lambda$, a {\it descent} is an integer $0<i<|\lambda|$ such that the entry $i+1$ appears in $T$
below  entry $i$, that is to say, the vertical coordinate of $T^{-1}(i+1)$ is greater than that
of $T^{-1}(i)$. The number of descents is denoted $d(T)$. Fig.~\ref{Fig_Y_tab} gives an
example.

\begin{figure}[h]
\centerline{ \ytableausetup{centertableaux}
\begin{ytableau}
1 & 2 & 3&5 \\
4 & 6 & 8 \\
7
\end{ytableau}
} \caption{ Standard Young tableau of shape $(4,3,1)$ with the set of descents $\{3,5,6\}$.
\label{Fig_Y_tab}}
\end{figure}


Now we proceed to the decomposition of the characters $\sigma_k^n$ and $\tau_k^n$.

\begin{proposition}\label{proposition_sigma_in_irreducibles}
 We have
 $$
 \sigma_k^n=\sum_{\lambda\in \mathbb Y_n} s_k(\lambda) \chi^\lambda,
 $$
 where $s_k(\lambda)$ is the number of semistandard Young tableaux of shape $\lambda$ with entries
 belonging to the set $\{1,\dots,k\}$.
\end{proposition}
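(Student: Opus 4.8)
The plan is to realize $\sigma_k^n$ as the character of the concrete permutation module $R^n_k={\cal L}_2(A^n_k)$ and to decompose this module into irreducibles by passing to the Frobenius characteristic, which sends $\chi^\lambda$ to the Schur function $s_\lambda$. First I would decompose $A^n_k$ into $\Sym_n$-orbits: two words lie in the same orbit exactly when they have the same \emph{content}, i.e.\ the same number $n_i$ of occurrences of each letter $i\in\{1,\dots,k\}$, so the orbits are indexed by weak compositions $\mu=(n_1,\dots,n_k)$ with $\sum_i n_i=n$. The stabilizer of a word of content $\mu$ consists of the permutations fixing the position-set of every letter, which is the Young subgroup $\Sym_{n_1}\times\cdots\times\Sym_{n_k}$; hence the orbit submodule is $M^\mu=\mathrm{Ind}_{\Sym_{n_1}\times\cdots\times\Sym_{n_k}}^{\Sym_n}\mathbf 1$, and $R^n_k=\bigoplus_\mu M^\mu$.

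The characteristic of an induced trivial module from a Young subgroup is the product $h_{n_1}\cdots h_{n_k}$ of complete homogeneous symmetric functions, so summing over orbits gives $\mathrm{ch}(\sigma^n_k)=\sum_{n_1+\cdots+n_k=n} h_{n_1}\cdots h_{n_k}$, the degree-$n$ part of $H^k$ with $H=\sum_{m\ge 0}h_m$. (Equivalently, one can get the same expression directly from $\sigma^n_k(g)=k^{\ell_n(g)}$ by expanding $\sum_{\rho\vdash n} z_\rho^{-1}k^{\ell(\rho)}p_\rho$ and using $H=\exp(\sum_r p_r/r)$.) The next step is to expand into the Schur basis: writing $h_\mu=\sum_\lambda K_{\lambda\mu}s_\lambda$, where the Kostka number $K_{\lambda\mu}$ counts semistandard tableaux of shape $\lambda$ and content $\mu$ (this is valid for weak compositions, since $M^\mu$ depends only on the multiset of parts), and then summing over all contents $\mu$, the coefficient of $s_\lambda$ becomes $\sum_\mu K_{\lambda\mu}=s_k(\lambda)$. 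Indeed, every semistandard tableau with entries in $\{1,\dots,k\}$ has a well-defined content, and grouping such tableaux by content counts all of them. Applying the inverse characteristic yields $\sigma^n_k=\sum_{\lambda\in\mathbb Y_n} s_k(\lambda)\chi^\lambda$, as claimed.

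The work concentrates in two standard but genuine inputs, either of which I would regard as the main obstacle: the identification of the orbit stabilizers together with the fact that $\mathrm{ch}(M^\mu)=h_\mu$, and the Schur expansion $h_\mu=\sum_\lambda K_{\lambda\mu}s_\lambda$ (Young's rule). The closing identity $\sum_\mu K_{\lambda\mu}=s_k(\lambda)$ is purely a matter of definitions. An equivalent route that bypasses symmetric functions is to invoke Schur--Weyl duality on $(\mathbb C^k)^{\otimes n}\cong {\cal L}_2(A^n_k)$: as a $GL_k\times\Sym_n$-bimodule one has $(\mathbb C^k)^{\otimes n}=\bigoplus_{\lambda\in\mathbb Y_n^k} W^\lambda\otimes V^\lambda$, so the multiplicity of $V^\lambda$ in $R^n_k$ is $\dim W^\lambda$, and the Weyl dimension formula identifies $\dim W^\lambda$ with the number $s_k(\lambda)$ of semistandard tableaux of shape $\lambda$ with entries in $\{1,\dots,k\}$.
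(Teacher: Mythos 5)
Your argument is correct, and your \emph{primary} route is genuinely different from the paper's. The paper's proof is essentially your closing remark: it identifies ${\cal L}_2(A^n_k)$ with the tensor power $W_k^{\otimes n}$ of a $k$-dimensional space and quotes the Schur--Weyl decomposition $W_k^{\otimes n}=\bigoplus_{\lambda\in\mathbb Y_n^k}{\rm Dim}_k(\lambda)\,V^\lambda$, reading off ${\rm Dim}_k(\lambda)=s_k(\lambda)$ as the semistandard-tableau count for the commuting $GL_k$-action. Your main argument instead stays entirely inside the representation theory of $\Sym_n$: you split $A^n_k$ into orbits by content, identify each orbit module as $\mathrm{Ind}_{\Sym_{n_1}\times\cdots\times\Sym_{n_k}}^{\Sym_n}\mathbf 1$ with Frobenius characteristic $h_{n_1}\cdots h_{n_k}$, and then apply Young's rule $h_\mu=\sum_\lambda K_{\lambda\mu}s_\lambda$ and sum the Kostka numbers over all contents to get $s_k(\lambda)$. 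Both proofs are complete. What yours buys is independence from the $GL_k$ side (no double centralizer theorem, no Weyl dimension formula), and the intermediate formula $\mathrm{ch}(\sigma^n_k)=\sum_{n_1+\cdots+n_k=n}h_{n_1}\cdots h_{n_k}$ is reusable elsewhere; the cost is that you must import Young's rule and the characteristic map, whereas the paper gets the statement in one line from a single classical citation. The only point you leave slightly implicit is that $K_{\lambda\mu}$ for a weak composition $\mu$ equals the Kostka number for the rearranged partition -- this follows from the symmetry of the SSYT count in the content (or of the Schur function), which you correctly flag and which is standard, so there is no gap.
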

\begin{proof}
Let $W_k$ be a $k$-dimensional vector space with basis $w_1,\dots,w_k$. Symmetric group $\Sym_n$
acts on the tensor power  $W^{\otimes n}_k$ by permuting the factors. Note that the basis of
$W^{\otimes n}_k$ is enumerated by elements of $A^n_k$, thus  the representation in $W^{\otimes
n}_k$ is equivalent to $R^n_k$. The decomposition of the representation  in $W^{\otimes n}_k$ into
irreducibles is a well-known fact related to the Schur-Weyl duality (see e.g.\ \cite{W}):
\begin{equation}
\label{eq_Schur_Weyl_decomposition}
 W^{\otimes n}_k=\bigoplus_{\lambda\in {\mathbb Y_n^k}} {\rm Dim}_k(\lambda)\cdot V^{\lambda},
\end{equation}
 where ${\rm Dim}_k(\lambda)$  is equal to the
 number of semistandard Young tableaux with entries from $\{1,\dots,k\}$
(which, in turn, equals to the dimension of the irreducible representation, corresponding to $\lambda$, of the  group
of unitary matrices of size $k$). The proposition easily follows.
\end{proof}

\begin{proposition}\label{prop_tau_in_irreducibles}
We have
 $$
 \tau_k^n=\sum_{\lambda\in \mathbb Y_n} m_k(\lambda) \chi^\lambda,
 $$
 where $m_k(\lambda)$ is the number of standard Young tableaux of shape $\lambda$ with $k-1$
 descents.
\end{proposition}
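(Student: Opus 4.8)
The plan is to combine the definition of $\tau^n_k$ with the decomposition of $\sigma^n_k$ already obtained in Proposition \ref{proposition_sigma_in_irreducibles}, reduce the statement to a purely combinatorial identity for the coefficients, and then evaluate that identity using the binomial relation established in the proof of Proposition \ref{sigma_tau}. Substituting Proposition \ref{proposition_sigma_in_irreducibles} into Definition \ref{eq_def_tau} and collecting the coefficient of each $\chi^\lambda$ yields
$$\tau^n_k=\sum_{\lambda\in\mathbb Y_n} m_k(\lambda)\,\chi^\lambda,\qquad m_k(\lambda)=\sum_{j=0}^{k-1}(-1)^j\binom{n+1}{j}\,s_{k-j}(\lambda).$$
Hence it suffices to show that this $m_k(\lambda)$ equals the number of standard Young tableaux of shape $\lambda$ with exactly $k-1$ descents.

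The crucial ingredient, and the step I expect to require the most work, is a formula expressing $s_m(\lambda)$ through the descent statistic on standard tableaux:
$$s_m(\lambda)=\sum_{T\in\SYT(\lambda)}\binom{m-d(T)+n-1}{n}.$$
I would prove this by \emph{standardization}: a semistandard tableau $S$ of shape $\lambda$ with entries in $\{1,\dots,m\}$ factors uniquely as a pair $(T,f)$, where $T\in\SYT(\lambda)$ is the standardization of $S$ (equal entries of $S$ being labelled from left to right) and $f$ is the weakly increasing rearrangement of the entries of $S$. The column-strictness of $S$ translates precisely into the requirement $f(i)<f(i+1)$ at the descents $i$ of $T$, all other ascents of $f$ being only weak. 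For a fixed $T$ with $d=d(T)$ descents, the admissible $f$ are the weakly increasing words of length $n$ over an alphabet of size $m-d$, of which there are $\binom{m-d+n-1}{n}$; summing over $T$ gives the displayed formula. This is exactly the principal specialization of Gessel's fundamental quasisymmetric expansion $s_\lambda=\sum_{T}F_{\mathrm{Des}(T)}$ of the Schur function, so it could alternatively be quoted from the theory of $P$-partitions; note that the descent set occurring there matches the convention fixed in the paper.

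It then remains to assemble the pieces. Inserting the descent formula (valid since $k-j\ge 1$ throughout) and interchanging the two summations gives
$$m_k(\lambda)=\sum_{T\in\SYT(\lambda)}\ \sum_{j=0}^{k-1}(-1)^j\binom{n+1}{j}\binom{(k-j)-d(T)+n-1}{n}.$$
For a fixed $T$ write $d=d(T)$; the summands with $k-j-d\le 0$ vanish because the corresponding count of words over an empty alphabet is $0$, so with $m:=k-1-d$ the inner sum reduces to $\sum_{j=0}^{m}(-1)^j\binom{n+1}{j}\binom{n+m-j}{n}$. After the substitution $\alpha=n+1$ and the reindexing $i=m-j$ this is exactly the identity proved for Proposition \ref{sigma_tau}, hence it equals $1(m=0)$, that is $1(d(T)=k-1)$. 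Therefore $m_k(\lambda)$ counts the standard Young tableaux of shape $\lambda$ with exactly $k-1$ descents, as claimed. The only genuine obstacle is the standardization formula of the second paragraph; once it is in place, the remainder is a mechanical application of an identity the paper has already established.
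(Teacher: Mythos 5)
Your proof is correct and is essentially the paper's own argument: the same reduction via Proposition \ref{proposition_sigma_in_irreducibles}, the same standardization bijection between semistandard tableaux and pairs (standard tableau, weakly increasing word strict at descents), and the same binomial identity from the proof of Proposition \ref{sigma_tau}. The only cosmetic difference is that you evaluate the alternating sum directly (grouping by $T$ and using the identity to isolate $d(T)=k-1$), whereas the paper first passes to the inverse relation $s_k(\lambda)=\sum_j\binom{n+j}{j}m_{k-j}(\lambda)$ and proves that form combinatorially.
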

\begin{proof}
 Proposition \ref{proposition_sigma_in_irreducibles} and definition \ref{eq_def_tau}
 imply that
 $$
  \tau_k^n=\sum_{\lambda\in \mathbb Y_n} h_k(\lambda) \chi^\lambda,
 $$
 where
 $$
  h_k(\lambda)=\sum_{j=0}^{k-1} (-1)^j {{n+1} \choose {j}} s_{k-j}(\lambda).
 $$
 Thus, it remains to prove that $h_k(\lambda)=m_k(\lambda)$, which amounts to showing that
 for every $\lambda\in{\mathbb Y_n}$ and every $k=1,2\dots,n$ we have
 \begin{equation}
 \label{eq_x1}
  m_k(\lambda)=\sum_{j=0}^{k-1} (-1)^j {{n+1} \choose {j}} s_{k-j}(\lambda).
 \end{equation}
 Similarly to the proof of Proposition \ref{sigma_tau},
 \eqref{eq_x1} is equivalent to the inversion formula
 \begin{equation}
 \label{eq_x2}
  s_k(\lambda)=\sum_{j=0}^{k-1} {n+j\choose j} m_{k-j}(\lambda),
 \end{equation}
which will be shown combinatorially (see also \cite[eq.\ (7.96)]{St_book} for the proof of a more
general fact).

 Fix a standard Young tableau $T$ of shape $\lambda$. We call a non-decreasing integer sequence
 $X= (X_1,\dots, X_n)$ $T$-admissible if $1\le X_1$, $X_n\le k$ and $X_i<X_{i+1}$ for every descent $i$ of $T$.
 Given a pair $(T,X)$, where $X$ is $T$-admissible, we define a semistandard tableau $Y_{T,X}$
 of the same shape $\lambda$ by setting $Y_{T,X}(T^{-1}(j))=X_j$ for $j=1,\dots,n$. An example of such procedure
 is shown in Fig.\ \ref{Fig_descents_bijection}.

\begin{figure}[h]
\begin{center}
\begin{tabular}{lp{2cm}l} \ytableausetup{centertableaux}
\begin{ytableau}
1 & 2 & 3&5 \\
4 & 6 & 8 \\
7
\end{ytableau}
&& \begin{ytableau} 1 & 2 & \widehat 3& 4 &\widehat 5 &\widehat 6 &7 &8
\end{ytableau}

\\ \\ \ytableausetup{centertableaux}
\begin{ytableau}
1 & 1 & 1&2 \\
2 & 3 & 4 \\
4
\end{ytableau}
&& \begin{ytableau} 1 & 1 & 1 & 2 & 2 & 3 &4 &4
\end{ytableau}
\end{tabular}
\end{center}
 \caption{Standard Young tableau $T$ of shape $\lambda=(4,3,1)$ with $3$ descents (top-left),
 numbers $\{1,\dots,8\}$ with descents of $T$ marked by hats (top-right), lexicographically minimal $T$-admissible sequence $X$
 (bottom-right) and corresponding semistandard Young tableau $Y_{T,X}$ (bottom-left).\label{Fig_descents_bijection}}
\end{figure}

Observe that for a given standard Young tableau $T$ with $k-j-1$
 descents there are  ${n+j\choose j}$ ways to choose a $T$-admissible sequence $X$. One easily
 proves that the map $(T,X)\to Y_{T,X}$ is a bijection between the set of pairs $(T,X)$ and the
 set of semistandard Young tableaux of shape $\lambda$ with entries in $\{1,\dots,k\}$. The number of pairs
 $(T,X)$, where $T$ is a standard Young tableau of shape $\lambda$ and $X$ is a $T$-admissible sequence, is equal to
 the right-hand side of \eqref{eq_x2}, while the
 number of semistandard Young tableaux of shape $\lambda$ with entries in $\{1,\dots,k\}$ is
 $s_k(\lambda)$, so we are done.
\end{proof}

\begin{corollary}
 Block functions $\tau_k^n$ are traces of some representations of $\Sym_n$, in
 particular, they are characters.
\end{corollary}
\begin{proof}
By  Proposition \ref{prop_tau_in_irreducibles}, $\tau_k^n$ is the matrix trace of the
 representation
 \begin{equation}
 \label{eq_pi_into_irreducibles}
  \pi_k^n \cong \bigoplus_{\lambda\in \mathbb Y_n} m_k(\lambda) V^\lambda.\qedhere
 \end{equation}
\end{proof}

There is a certain duality among the characters $\tau_k^n$, as described   in the following
proposition.

\begin{proposition}
\label{proposition_tau_duality}
 Let $\pi_k^n$ be the representation with character $\tau_k^n$. As above, let $\widehat S^n$
 be the one-dimensional sign representation of $\Sym_n$. Then the representation $\pi_k^n \otimes \widehat
 S^n$ is equivalent to $\pi^n_{n+1-k}$, in particular, its character coincides with
 $\tau_{n+1-k}^n$.
\end{proposition}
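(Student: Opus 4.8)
The plan is to reduce the statement to a combinatorial identity about descents of standard Young tableaux and then establish that identity via transposition. Passing to characters, the representation $\pi_k^n \otimes \widehat S^n$ has character $g\mapsto \tau_k^n(g)\,(-1)^{n-\ell_n(g)}$, the product of $\tau_k^n$ with the sign character, which I will write $\epsilon$. Using the classical fact that tensoring an irreducible $V^\lambda$ by $\widehat S^n$ yields $V^{\lambda'}$, where $\lambda'$ is the conjugate (transposed) diagram, and invoking Proposition \ref{prop_tau_in_irreducibles}, I would rewrite
$$
 \tau_k^n\cdot\epsilon=\sum_{\lambda\in\mathbb Y_n} m_k(\lambda)\,\chi^{\lambda'}=\sum_{\mu\in\mathbb Y_n} m_k(\mu')\,\chi^{\mu}.
$$
Comparing with $\tau^n_{n+1-k}=\sum_\mu m_{n+1-k}(\mu)\,\chi^\mu$, the proposition reduces to the identity $m_k(\mu')=m_{n+1-k}(\mu)$ for all $\mu\in\mathbb Y_n$ and all $k$; the equivalence of representations then follows since a character determines a representation of a finite group up to isomorphism.

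To prove $m_k(\mu')=m_{n+1-k}(\mu)$, I would use the transposition map $T\mapsto T^t$, which is a bijection between standard Young tableaux of shape $\mu$ and those of shape $\mu'$. The key lemma is that transposition complements the descent set: writing $\mathrm{Des}(T)$ for the set of descents of $T$, an integer $i$ is a descent of $T$ if and only if it is \emph{not} a descent of $T^t$, so that $d(T^t)=(n-1)-d(T)$. Granting this, transposition restricts to a bijection between standard tableaux of shape $\mu$ with $n-k$ descents and standard tableaux of shape $\mu'$ with $k-1$ descents; since $m_{n+1-k}(\mu)$ counts the former and $m_k(\mu')$ the latter, the identity follows at once.

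The heart of the argument, and the step I expect to require the most care, is the descent-complementation lemma. Writing $(r_j,c_j)$ for the row and column of the entry $j$ in $T$, one has $i\in\mathrm{Des}(T)$ iff $r_{i+1}>r_i$, while $i\in\mathrm{Des}(T^t)$ iff $c_{i+1}>c_i$, since transposition swaps the roles of rows and columns. I would then verify, by a short case analysis on the relative position of the cells of $i$ and $i+1$, that exactly one of $r_{i+1}>r_i$ and $c_{i+1}>c_i$ holds. The two configurations that must be excluded are $i+1$ lying strictly south-east of $i$, and $i+1$ lying strictly north-west of $i$; each is ruled out by inspecting an intermediate cell, respectively $(r_{i+1},c_i)$ and $(r_i,c_{i+1})$, both of which belong to the Young diagram by left-justification, and whose entry would then be forced to satisfy contradictory inequalities squeezing it strictly between the integers $i$ and $i+1$. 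This dichotomy yields the complementation of descent sets and completes the proof.
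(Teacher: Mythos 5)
Your proposal is correct and follows essentially the same route as the paper: decompose via Proposition \ref{prop_tau_in_irreducibles}, use $V^\lambda\otimes\widehat S^n\cong V^{\lambda'}$, and reduce to the identity $m_k(\lambda')=m_{n+1-k}(\lambda)$ via the descent-complementation property $d(T)+d(T^t)=n-1$ of transposition. The only difference is that you supply the short case analysis proving the complementation lemma, which the paper asserts without proof.
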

\begin{proof}
 We multiply \eqref{eq_pi_into_irreducibles} by $\widehat
 S^n$ and use the fact that $V^\lambda\otimes\widehat
 S^n$ is equivalent to $V^{\lambda'}$, where $\lambda'$ is the transposed diagram obtained by
reflecting
 $\lambda$ about  the main diagonal, so that the row lengths of $\lambda'$
 become the column lengths of $\lambda$. We obtain:
 $$
  \pi^n_k \otimes \widehat S^n \cong \bigoplus_{\lambda\in \mathbb Y_n} m_k(\lambda') V^\lambda.
 $$
 We claim that $m_k(\lambda')=m_{n+1-k}(\lambda)$. Indeed, if
 $T$ is a standard Young tableau of shape $\lambda$, then every $i=1,\dots,n-1$ is either  a descent in
 of $T$ or a descent in $T'$, where $T'$ is the tableau of shape $\lambda'$ defined by
 $T'(i,j)=T(j,i)$. In particular, $d(T)+d(T')=n-1$.
\end{proof}

\begin{corollary}
 We have
 \begin{equation}
\label{eq_decomp_of_hat_sigma}
 \widehat \sigma_k^n=\sum_{j=0}^{k-1} {n+j\choose j} \tau ^n_{n+1-k+j}.
\end{equation}
\end{corollary}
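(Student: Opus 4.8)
The plan is to obtain the claimed formula for $\widehat\sigma_k^n$ by combining two ingredients already established: the expansion of $\sigma_k^n$ in terms of the $\tau^n$'s from Proposition \ref{sigma_tau}, and the duality $\pi_k^n\otimes\widehat S^n\cong\pi_{n+1-k}^n$ from Proposition \ref{proposition_tau_duality}. The key observation is that tensoring with the sign representation $\widehat S^n$ is a linear operation on the representation ring, and since $\widehat R^n_k=\widehat S^n\otimes R^n_k$ by definition, the character $\widehat\sigma_k^n$ is obtained from $\sigma_k^n$ by the same sign-twist.

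First I would start from equation \eqref{eq_decomp_of_sigma}, which reads
$$
\sigma_k^n=\sum_{j=0}^{k-1}{n+j\choose j}\tau^n_{k-j}.
$$
This is an identity of characters, hence an identity of (virtual) representations: the representation $R^n_k$ decomposes as a direct sum with multiplicities ${n+j\choose j}$ of the representations $\pi^n_{k-j}$. Tensoring both sides with $\widehat S^n$ and passing to characters, I would use that the character of $R^n_k\otimes\widehat S^n$ is exactly $\widehat\sigma_k^n$ on the left, while on the right each $\tau^n_{k-j}$ gets replaced by the character of $\pi^n_{k-j}\otimes\widehat S^n$.

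Next I would apply Proposition \ref{proposition_tau_duality}, which tells us that the character of $\pi^n_{k-j}\otimes\widehat S^n$ is $\tau^n_{n+1-(k-j)}=\tau^n_{n+1-k+j}$. Substituting this into the twisted version of \eqref{eq_decomp_of_sigma} gives
$$
\widehat\sigma_k^n=\sum_{j=0}^{k-1}{n+j\choose j}\tau^n_{n+1-k+j},
$$
which is precisely \eqref{eq_decomp_of_hat_sigma}. The argument is essentially a one-line manipulation once the two prior results are in hand, so there is no real obstacle; the only point requiring a moment's care is confirming that the index $n+1-k+j$ stays within the valid range $\{1,\dots,n\}$ as $j$ runs from $0$ to $k-1$, which it does since $n+1-k\ge 1$ for $k\le n$ and $n+1-k+(k-1)=n$. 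I would present this as a short proof that simply twists the inversion formula by the sign character.
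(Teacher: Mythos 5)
Your proof is correct and is exactly the argument the paper intends: the corollary is stated without proof immediately after Proposition \ref{proposition_tau_duality}, and the intended derivation is precisely to twist the inversion formula \eqref{eq_decomp_of_sigma} by the sign character and apply the duality $\pi^n_k\otimes\widehat S^n\cong\pi^n_{n+1-k}$ termwise. Your index check at the end is a nice touch but nothing further is needed.
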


\vskip0.3cm

We shall give now two further examples of block characters. Formula (\ref{eq_decomp_of_sigma})
generalizes as
\begin{equation}
\label{eq_decomposition_Ewens}
 \theta^{\ell_n(g)}=\sum_{j=1}^n{\theta+n-j\choose n}\tau_j^n(g),
\end{equation}
  where the generalized binomial coefficient involves
 parameter $\theta>0$.
This is a polynomial identity which may be shown by extrapolating from the integer
values $\theta\in\{1,\dots,n\}$.
For $\theta\geq 0$ the function $g\mapsto
p\,\theta^{\ell_n(g)}$, where $p^{-1}=\theta(\theta+1)\cdots(\theta+n-1)$,
is a probability on $\Sym_n$, known as Ewens' distribution, see
e.g.\ \cite{ABT, TE}. Ewens' distribution
was first discovered in the context of
population genetics and
has become central  in many contexts of pure and applied probability.
It is also intensively used in the harmonic
analysis on $\Sym_\infty$, see \cite{KOV}.

 As a simple corollary of the results in the next section,  the function $\theta^{\ell_n(g)}$
 is a character for $\theta\geq n-1$, but it is not positive definite for non-integer $0<\theta<n-1$.

Another natural series of characters is obtained by splitting  the set $A_k^n$ of words on $k$
letters in subsets invariant under $\Sym_n$. Taking the set of words with {\it exactly} $k$
letters,
the character of the corresponding representation is equal to the
number of such words fixed by $g$, which by the inclusion-exclusion principle is equal to
\begin{equation}\label{psi}
\psi_k^n:=\sum_{j=0}^{k-1} (-1)^j{k\choose j}\sigma_{k-j}^n=\sum_{j=1}^k (-1)^{k-j}{k\choose j}\sigma_j^n.
\end{equation}
From Proposition \ref{proposition_sigma_in_irreducibles} and an inversion formula one finds
$$\psi_k^n=\sum_{j=1}^k{n-j\choose k-j}\tau_j^n.$$
Both $\psi$- and $\tau$-characters can be obtained by the iterated differencing of the sequence
$\sigma^n_{\cdot}$ (with $\sigma_k^n=0$ for $k\leq 0$). Specifically, introducing forward and backward difference operators acting
as $\nabla ({x_{\cdot}})_k=x_k-x_{k-1}$
and  $\Delta ({x_{\cdot}})_k=x_{k+1}-x_k$, respectively,
we have $\tau_k^n=\nabla^n(\sigma_\cdot^n)_k$ and
$\psi_k^n=\Delta^k(\sigma^n_\cdot)_0$.

\section{The simplex of normalized block characters of $\Sym_n$}
\label{Section_simplex_finite_n}

The set of all block
characters is a convex cone of nonnegative linear combinations of the extreme normalized block characters.
In this section we prove that the normalized block characters form a simplex whose extreme points are
 the normalized versions of characters $\tau_k^n$. A closely related result can be found in Section 2 of \cite{KerberT}.

Because functions $\{\sigma_k^n\}_{k=1,\dots,n}$ form a  basis of the linear space of block functions,
and because the systems of functions $\{\tau_k^n\}$ and $\{\sigma_k^n\}$ are related by a triangular linear
transform, it follows that every block function $\varphi$ on $\Sym_n$ can be uniquely written in the basis of
characters $\tau_1^n, \dots,\tau_n^n$ as a linear combination
$$
 \varphi=\sum_{k=1}^n a_k \tau^n_k.
$$
On the other hand, the traces of irreducible representations $\chi^\lambda$, $\lambda\in {\mathbb
Y}_n$,  comprise an orthonormal basis in the space of central functions on $\Sym_n$ endowed with
the scalar product
$$
 \langle \varphi,\psi\rangle =\frac1{n!}\sum_{g\in \Sym_n} \varphi(g)\psi(g).
$$
Now,  immediately  from Proposition \ref{prop_tau_in_irreducibles}, we have
$$
  a_k=\dfrac{\langle \varphi,\chi^{\rho_k^n}\rangle}{{n-1\choose k-1}},
$$
for  $\rho_k^n$ the hook diagram with $k$ rows: $\rho_k^n=(n-k+1,1^{k-1})$. Indeed, there are
${n-1\choose k-1}$ standard Young tableaux of shape $\rho_k^n$, and every such tableau has exactly
$k-1$ descents.

\begin{corollary}
 A block function $\varphi$ is a  character if and only if $a_k\ge 0$ for $k=1,\dots,n$.
\end{corollary}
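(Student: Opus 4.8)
The plan is to prove both directions of the equivalence directly, using the machinery already assembled. For the backward (``if'') direction I would argue as follows. Suppose $a_k \ge 0$ for all $k=1,\dots,n$. We have already established in the Corollary following Proposition \ref{prop_tau_in_irreducibles} that each $\tau_k^n$ is the matrix trace of an honest representation $\pi_k^n$, and hence is a character. Since the class of characters (positive-definite central functions) is closed under nonnegative linear combinations—this is immediate from the fact that a nonnegative sum of Hermitian nonnegative-definite matrices is again Hermitian nonnegative-definite, and central functions form a linear space—the function $\varphi=\sum_{k=1}^n a_k\tau_k^n$ is a character whenever all $a_k\ge 0$.

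For the forward (``only if'') direction, suppose $\varphi$ is a character, and I must show each coefficient $a_k$ is nonnegative. Here I would exploit the explicit formula for the coefficients derived just above the statement, namely
$$
 a_k=\frac{\langle \varphi,\chi^{\rho_k^n}\rangle}{\binom{n-1}{k-1}},
$$
where $\rho_k^n$ is the hook diagram with $k$ rows. Since the binomial coefficient in the denominator is strictly positive, it suffices to show that the inner product $\langle \varphi,\chi^{\rho_k^n}\rangle$ is nonnegative. Because $\varphi$ is a character, it decomposes over the irreducible characters as $\varphi=\sum_{\mu\in\mathbb Y_n} c_\mu \chi^\mu$ with multiplicities $c_\mu\ge 0$ (a character is the trace of a genuine representation, so its expansion in irreducible traces has nonnegative integer—more generally nonnegative—coefficients). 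By orthonormality of the $\chi^\mu$ under the scalar product $\langle\cdot,\cdot\rangle$, we get $\langle\varphi,\chi^{\rho_k^n}\rangle = c_{\rho_k^n}\ge 0$, and hence $a_k\ge 0$.

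The one point that requires care—and is the main thing I would want to pin down—is the precise justification that a character (in the positive-definite sense used in this paper) expands with nonnegative coefficients over the irreducible traces $\chi^\mu$. For a \emph{finite} group every positive-definite class function is a nonnegative combination of irreducible traces: positive-definiteness of $\varphi$ forces each Fourier coefficient $c_\mu=\langle\varphi,\chi^\mu\rangle$ to be nonnegative, because $c_\mu$ equals, up to the positive factor $\dim V^\mu / |G|$, an eigenvalue-type average of $\varphi$ against the orthogonal projection onto the $\mu$-isotypic component, and positive-definiteness makes this average nonnegative. I would state this as the key lemma and verify it cleanly, since the whole forward direction rests on it. Everything else is bookkeeping with the already-proved inner-product formula.

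Thus the proof reduces to combining two facts: nonnegative combinations of the $\tau_k^n$ are characters (giving ``if''), and the coefficient formula together with nonnegativity of Fourier coefficients of a finite-group character gives ``only if''. I expect no genuine obstacle beyond the care needed in the positive-definiteness argument of the previous paragraph.
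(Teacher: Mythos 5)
Your proof is correct and follows essentially the same route as the paper: both directions rest on the formula $a_k=\langle\varphi,\chi^{\rho_k^n}\rangle/\binom{n-1}{k-1}$ together with the fact that a positive-definite class function on a finite group has nonnegative coefficients in its expansion over the irreducible traces. The only difference is that you explicitly flag and sketch that last fact, which the paper uses without comment.
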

\begin{proof}
 If $a_k\ge 0$ for $k=1,\dots,n$, then
 \begin{equation}
 \label{eq_x3}
  \varphi=\sum_{\lambda\in \mathbb Y_n} b(\lambda) \chi^{\lambda},
 \end{equation}
 with non-negative coefficients $b(\lambda)$. Since the traces of irreducible representation
 $\chi^\lambda$ are positive-definite, so is $\varphi$.

 If $\varphi$ is positive-definite, then in the decomposition \eqref{eq_x3} all coefficients
 $b(\lambda)$ are non-negative, in particular, $b(\rho^n_k)\ge 0$ for $k=1,\dots,n$. It follows
 that $a_k\ge 0$ for $k=1,\dots,n$.
\end{proof}

\begin{corollary}
 \label{corollary_classification_for_S_n}
 The set of normalized block characters is a simplex whose  extreme points are the normalized characters
 $ \tau_k^n(\cdot)/\tau_k^n(e)$, $k=1,\dots,n$.
\end{corollary}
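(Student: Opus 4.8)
The plan is to deduce Corollary \ref{corollary_classification_for_S_n} directly from the preceding corollary, which characterizes exactly when a block function is a character. The key observation is that we have \emph{two} compatible bases for the $n$-dimensional space of block functions: the basis $\{\sigma_k^n\}_{k=1}^n$ and the triangularly related basis $\{\tau_k^n\}_{k=1}^n$. Every block function $\varphi$ has a unique expansion $\varphi=\sum_{k=1}^n a_k \tau_k^n$, and the previous corollary tells us that $\varphi$ is a character precisely when all $a_k\ge 0$. This is exactly the statement that the cone of block characters is the simplicial cone spanned by the rays $\mathbb{R}_{\ge 0}\,\tau_k^n$.

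First I would normalize. A normalized block character satisfies $\varphi(e)=1$, which in coordinates reads $\sum_{k=1}^n a_k \tau_k^n(e)=1$. Since each $\tau_k^n$ is a genuine character (by the Corollary following Proposition \ref{prop_tau_in_irreducibles}), we have $\tau_k^n(e)>0$ for every $k$; indeed $\tau_k^n(e)=\dim \pi_k^n>0$ because, e.g., the hook shape $\rho_k^n$ contributes at least one standard tableau with $k-1$ descents. Thus the normalization constraint is a genuine affine hyperplane cutting across the positive orthant in the $\tau$-coordinates.

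Next I would assemble the simplex statement. Writing $a_k=c_k/\tau_k^n(e)$ with $c_k\ge 0$, the function $\varphi=\sum_k c_k\,\bigl(\tau_k^n/\tau_k^n(e)\bigr)$ is a normalized block character if and only if $c_k\ge 0$ and $\sum_k c_k=1$. This is precisely the assertion that the set of normalized block characters is the convex hull of the $n$ points $\tau_k^n(\cdot)/\tau_k^n(e)$, $k=1,\dots,n$, with the $c_k$ serving as barycentric coordinates. Because these $n$ points are affinely independent---a consequence of the linear independence of $\tau_1^n,\dots,\tau_n^n$ as a basis of the block functions---their convex hull is a genuine $(n-1)$-dimensional simplex, and each normalized character has a \emph{unique} representation as such a convex combination. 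The extreme points of a simplex are exactly its vertices, namely the normalized $\tau_k^n$, which completes the proof.

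The only genuine verification needed is that the normalized characters $\tau_k^n/\tau_k^n(e)$ are the extreme points rather than merely a spanning set, and for this the linear independence of the $\tau_k^n$ (equivalently, affine independence of their normalizations together with the strict positivity $\tau_k^n(e)>0$) is the crucial input. I expect no serious obstacle here, since all the structural work was already done in the two preceding corollaries; the main point is simply to translate the sign condition $a_k\ge 0$ into the language of convex geometry and to record that affine independence upgrades ``convex hull of $n$ points'' to ``simplex with unique barycentric coordinates.''
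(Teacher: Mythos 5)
Your argument is correct and is essentially the paper's own: the paper states this as an immediate corollary of the preceding one (a block function $\sum_k a_k\tau_k^n$ is a character iff all $a_k\ge 0$), and you have simply filled in the routine convex-geometry translation --- positivity of the $\tau_k^n(e)$, passage to barycentric coordinates, and affine independence from the fact that the $\tau_k^n$ form a basis of the block functions. No discrepancies to report.
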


\begin{corollary}
 Suppose $\theta$ is not  integer. Then $\theta^{\ell_n(g)}$ is a character if and only if
 $\theta>n-1$.
\end{corollary}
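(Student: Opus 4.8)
The plan is to apply the preceding Corollary (the $a_k \ge 0$ criterion) to the explicit expansion of $\theta^{\ell_n(g)}$ in the $\tau$-basis supplied by \eqref{eq_decomposition_Ewens}. That expansion reads $\theta^{\ell_n(g)}=\sum_{j=1}^n \binom{\theta+n-j}{n}\tau_j^n(g)$, so in the notation of the criterion the coefficients are $a_j = \binom{\theta+n-j}{n}$. By the Corollary, $\theta^{\ell_n(g)}$ is a character precisely when $a_j \ge 0$ for all $j=1,\dots,n$, so the entire statement reduces to determining, for non-integer $\theta$, exactly when every one of these generalized binomial coefficients is nonnegative.

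The core of the argument is therefore a sign analysis of $\binom{\theta+n-j}{n}=\frac{(\theta+n-j)(\theta+n-j-1)\cdots(\theta+1-j)}{n!}$, a product of $n$ consecutive real factors descending from $\theta+n-j$. First I would treat the extreme index $j=n$, whose coefficient is $\binom{\theta}{n}=\frac{\theta(\theta-1)\cdots(\theta-n+1)}{n!}$; here the $n$ factors run over the interval $(\theta-n+1,\theta]$. For non-integer $\theta>n-1$ every factor is strictly positive, giving $a_n>0$, and more generally for $\theta>n-1$ every factor in every $a_j$ (the smallest factor being $\theta+1-j \ge \theta+1-n>0$) is positive, so all $a_j>0$ and $\theta^{\ell_n(g)}$ is a character. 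This establishes the "if" direction.

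For the converse I would show that when $\theta$ is non-integer and $0<\theta<n-1$ some coefficient is negative. The natural target is again $a_n=\binom{\theta}{n}$: since $\theta$ is not an integer and $\theta<n-1$, the point $\theta$ lies strictly inside an interval $(m,m+1)$ for some integer $0\le m \le n-2$, so the product $\theta(\theta-1)\cdots(\theta-n+1)$ contains both positive factors (those $\theta-i$ with $i\le m$) and negative factors (those with $i>m$), and its sign is governed by the parity of the number of negative factors. When this parity is odd, $a_n<0$ and the criterion fails outright. The only delicate point is the case in which $a_n$ happens to be positive because an even number of its factors are negative; then I would instead exhibit negativity of a neighboring coefficient $a_{n-1}=\binom{\theta+1}{n}$, which shifts the window of $n$ consecutive factors by one and hence flips the parity of negative factors. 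Comparing $a_n$ and $a_{n-1}$, exactly one of the two must be negative whenever $0<\theta<n-1$ is non-integer, since their ratio $a_{n-1}/a_n=(\theta+1)/(\theta-n+1)$ is negative precisely in this range.

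The main obstacle I anticipate is purely bookkeeping: carefully tracking the sign of the consecutive product as a function of where the non-integer $\theta$ falls between consecutive integers, and verifying that the pair $\{a_{n-1},a_n\}$ always contains a negative member. This is elementary but must be done without off-by-one errors in counting how many factors $\theta-i$ are negative; the ratio computation $a_{n-1}/a_n=(\theta+1)/(\theta-n+1)<0$ for $0<\theta<n-1$ gives a clean, calculation-light way to close this gap and thereby complete the "only if" direction.
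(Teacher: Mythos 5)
Your proposal is correct and follows exactly the paper's route: the paper's proof is the one-line observation that all coefficients $\binom{\theta+n-j}{n}$ in \eqref{eq_decomposition_Ewens} are nonnegative if and only if $\theta>n-1$, combined with the preceding criterion that a block function is a character iff its $\tau$-coefficients are nonnegative. Your sign analysis (in particular the ratio $a_{n-1}/a_n=(\theta+1)/(\theta-n+1)<0$ for non-integer $0<\theta<n-1$) simply supplies the elementary verification that the paper leaves implicit.
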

\begin{proof}
 Indeed, all the coefficients in \eqref{eq_decomposition_Ewens} are non-negative if and only if $\theta>n-1$.
\end{proof}

\medskip


\section{Characters $\tau_k^n$ and the coinvariant algebra}
\label{Section_coinvariant}

In this section we construct representations $\pi^n_k$ with traces $\tau_k^n$.
We start with computation of the dimension  $\tau^n_k(e)$. We  give first a purely combinatorial
proof, an alternative representation-theoretic proof will be given at the end of this section and yet another proof
based on the branching rule will be retained for the next section.

Let $g=(g(1),\dots,g(n))\in \Sym_n$ be a permutation written in the one-row notation. The
\emph{descent number} $d(g)$ counts
 descents, that is positions $j$ such that $g(j+1)<g(j)$. In
particular, the descent number of the identity permutation $e=(1,2,\dots,n)$ is $0$, while the
descent number of the reverse permutation $(n,\dots,2,1)$ is $n-1$. The Eulerian number
$\EuNum{n}{k}$ counts  permutations from $\Sym_n$ whose descent number is $k-1$.

\begin{proposition}\label{proposition_dimension_is_euler} The dimension of the representation of $\Sym_n$ with character $\tau_k^n$ is
the Eulerian number
$$
 \tau_k^n(e)=\EuNum{n}{k}.
$$
\end{proposition}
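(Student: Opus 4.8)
The plan is to evaluate $\tau_k^n(e)$ by combining two facts already established in the excerpt: the decomposition in Proposition \ref{prop_tau_in_irreducibles} and the classical correspondence between standard Young tableaux and permutations. First I would write $\tau_k^n(e) = \sum_{\lambda\in\mathbb{Y}_n} m_k(\lambda)\,\chi^\lambda(e)$, using that $\tau_k^n$ is a genuine character. Since $\chi^\lambda(e) = \dim V^\lambda = f^\lambda$ equals the number $|\SYT(\lambda)|$ of standard Young tableaux of shape $\lambda$, this becomes
\begin{equation}
 \tau_k^n(e) = \sum_{\lambda\in\mathbb{Y}_n} m_k(\lambda)\, f^\lambda,
\end{equation}
where $m_k(\lambda)$ is the number of standard Young tableaux of shape $\lambda$ with exactly $k-1$ descents.

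The next step is to recognize the product $m_k(\lambda)\,f^\lambda$ as a count over \emph{pairs} of standard Young tableaux of the same shape: one tableau $P$ of shape $\lambda$ carrying $k-1$ descents, and an arbitrary tableau $Q$ of shape $\lambda$. Summing over all $\lambda\in\mathbb{Y}_n$ therefore counts pairs $(P,Q)$ of standard Young tableaux of equal shape and size $n$ with the descent number of $P$ equal to $k-1$. At this point I would invoke the Robinson--Schensted (RS) correspondence, which is a bijection between permutations $g\in\Sym_n$ and such pairs $(P,Q)$ of equal shape. The only remaining ingredient is the compatibility of descents under RS: the descent set of the \emph{recording tableau} (or insertion tableau, depending on convention) coincides with the descent set of the permutation. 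I would use the standard fact that $d(g)$ equals the number of descents of one of the two tableaux in the RS image of $g$, so that permutations with $d(g)=k-1$ correspond exactly to pairs $(P,Q)$ with $d(P)=k-1$.

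Putting these together, the sum $\sum_\lambda m_k(\lambda) f^\lambda$ equals the number of permutations $g\in\Sym_n$ with descent number $k-1$, which by definition is the Eulerian number $\EuNum{n}{k}$. This yields $\tau_k^n(e)=\EuNum{n}{k}$ as claimed.

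The main obstacle I anticipate is the descent-compatibility statement for the RS correspondence: one must be careful about which of the two tableaux records the descent set of $g$ and about matching the paper's own descent convention (descent of a tableau $T$ meaning $i+1$ lies strictly below $i$) with the convention in which the descent set of the permutation appears. I would resolve this by recalling Schützenberger's result that under RS the descent set of the permutation $g$ equals the descent set of its recording tableau $Q(g)$; applying RS to the inverse permutation (or transposing the roles of $P$ and $Q$) then lets me place the descent constraint on whichever tableau is convenient, and since the Eulerian numbers are symmetric in the sense that $d(g)$ and $d(g^{-1})$ have the same distribution, the count is unaffected. An alternative that sidesteps RS entirely would be to verify the identity $\sum_\lambda m_k(\lambda) f^\lambda = \EuNum{n}{k}$ directly through the generating-function/quasisymmetric-function machinery for descent statistics, but the RS route is the cleanest and is what I would present.
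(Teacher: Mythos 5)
Your proposal is correct and follows essentially the same route as the paper: evaluate $\tau_k^n(e)=\sum_\lambda m_k(\lambda)\dim V^\lambda$ via Proposition \ref{prop_tau_in_irreducibles} and identify this sum with the Eulerian number through the Robinson--Schensted correspondence and the compatibility of descents with the recording tableau. The care you take over which tableau carries the descent set matches the property the paper cites from Fulton, so there is nothing to add.
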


\begin{proof}
We refer to \cite{Fulton} for the Robinson-Schensted-Knuth (RSK) correspondence between
permutations and pairs of standard Young tableaux. The RSK has the following property:
$g(j+1)<g(j)$ in permutation $g$ if and only if entry $j$ is a descent in the recording tableau
(see \cite{Fulton}).

It follows that the RSK sends permutations with $k$ descents to pairs of Young tableaux, such that
the recording tableau has $k$ descents. Thus
\begin{equation}
\label{eq_euler_number_into_sum}
 \EuNum{n}{k}=\sum_{\lambda\in \mathbb Y_n} m_k(\lambda) {\rm dim}(\lambda),
\end{equation}
where ${\rm dim}(\lambda)=\chi^\lambda(e)$ is the dimension of the corresponding irreducible representation,
equal to the number of standard Young tableaux of shape $\lambda$.
From the latter
classical fact and comparing
\eqref{eq_euler_number_into_sum} with Proposition \ref{prop_tau_in_irreducibles} the proof of the
proposition is concluded.
\end{proof}

Now we proceed to the construction of representations $\pi_k^n$.

\begin{proposition}
 The regular representation $R_{\rm reg}^n$ of $\Sym_n$ can be decomposed as
 \begin{equation}
 \label{eq_decomposition_of_regular}
  R^n_{\rm reg}=\pi_1^n \oplus \pi_2^n\oplus\dots \oplus \pi_n^n,
 \end{equation}
 where  representation $\pi_k^n$ has trace $\tau_k^n$.
\end{proposition}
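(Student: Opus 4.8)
The plan is to decompose the regular representation by exploiting the character identity that sums the $\tau_k^n$ over all $k$. Since the regular representation $R_{\rm reg}^n$ has the well-known decomposition $R^n_{\rm reg}=\bigoplus_{\lambda\in\mathbb Y_n}\dim(\lambda)\,V^\lambda$ into irreducibles, and since Proposition \ref{prop_tau_in_irreducibles} tells us $\pi_k^n\cong\bigoplus_{\lambda}m_k(\lambda)V^\lambda$, it suffices to verify that for every $\lambda\in\mathbb Y_n$ the multiplicities match up, namely
$$
 \sum_{k=1}^n m_k(\lambda)=\dim(\lambda).
$$
This would immediately give the claimed isomorphism $R^n_{\rm reg}\cong\bigoplus_{k=1}^n\pi_k^n$.

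First I would recall what each side counts. By the definition preceding Proposition \ref{prop_tau_in_irreducibles}, $m_k(\lambda)$ is the number of standard Young tableaux of shape $\lambda$ having exactly $k-1$ descents. On the other hand, $\dim(\lambda)=\chi^\lambda(e)$ is the total number of standard Young tableaux of shape $\lambda$. The key observation is that the descent number $d(T)$ of a standard Young tableau of shape $\lambda$ always satisfies $0\le d(T)\le n-1$, so as $k$ ranges over $\{1,\dots,n\}$ the quantity $k-1$ ranges over exactly the admissible values $\{0,\dots,n-1\}$ for descent counts. Therefore summing $m_k(\lambda)$ over $k=1,\dots,n$ simply tallies all standard Young tableaux of shape $\lambda$, partitioned according to their number of descents, and this total is precisely the number of standard Young tableaux, i.e.\ $\dim(\lambda)$. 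This is really a matter of observing that we are grouping the same finite set of tableaux by a statistic and then ungrouping.

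I expect no serious obstacle here; the argument is essentially a counting identity combined with the already-established decomposition from Proposition \ref{prop_tau_in_irreducibles}. The only point deserving care is the range of the index: one must note that there are no standard tableaux with more than $n-1$ descents (the maximum $n-1$ being attained only by a single-column shape) so that the sum over $k=1,\dots,n$ does not miss any tableaux and does not include spurious terms. An alternative, even shorter route is dimension-counting at the level of characters: evaluating the identity $\sum_{k=1}^n\tau_k^n=\chi^{R_{\rm reg}^n}$ at the identity gives $\sum_{k=1}^n\EuNum{n}{k}=n!$ by Proposition \ref{proposition_dimension_is_euler}, which is the classical fact that the Eulerian numbers in row $n$ sum to $n!$; but to get the full representation-theoretic isomorphism rather than just equality of dimensions one still needs the per-$\lambda$ multiplicity identity above, so I would present the tableau-counting argument as the main line.
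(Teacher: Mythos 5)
Your proof is correct and follows essentially the same route as the paper's: both reduce the claim to the identity $\sum_{k=1}^n m_k(\lambda)=\dim(\lambda)$, obtained by classifying the standard Young tableaux of shape $\lambda$ according to their number of descents, and then invoke Proposition \ref{prop_tau_in_irreducibles} together with the standard decomposition of the regular representation. Your extra care about the range of the descent statistic is a harmless elaboration of the same argument.
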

\begin{proof}
 Indeed, we have
 $$
  \sum_{k=1}^{n} \tau_k^n=\sum_{k=1}^n \sum_{\lambda\in \mathbb Y_n} m_k(\lambda) \chi^\lambda=
  \sum_{\lambda\in \mathbb Y_n} {\rm dim}(\lambda) \chi^{\lambda}=\chi^{\rm reg},
 $$
 where ${\rm \dim}(\lambda)=\sum_k m_k(\lambda)$ is the dimension of $\pi^\lambda$, that is
the number of standard Young tableaux of shape
 $\lambda$. Here $\chi^{\rm reg}(g)$ is the character of the regular representation of $\Sym_n$.
\end{proof}
\noindent
In the remaining part  of this section we focus on an explicit construction of the
decomposition \eqref{eq_decomposition_of_regular}.

\smallskip

Let us denote  ${\mathcal R}_n$ the algebra of polynomials in variables $x_1,\dots,x_n$.
The symmetric group $\Sym_n$  acts naturally on ${\mathcal R}_n$ by permuting the variables. Let
${\mathcal R}_n^{\Sym_n}$ denote the subalgebra of invariants of this action, which is the algebra
of symmetric polynomials. \emph{The coinvariant algebra} ${\mathcal R}^*_n$ is the
quotient-algebra
$$
 {\mathcal R}^*_n={\mathcal R}_n/I_n,
$$
where $I_n$ is the ideal in ${\mathcal R}_n$ spanned by the symmetric polynomials without constant
term. The elementary symmetric polynomials
$$
 e_k:=\sum_{i_1<i_2<\dots i_k} x_{i_1}\cdots x_{i_k}
$$
is the set of algebraic generators of
 ${\mathcal R}_n^{\Sym_n}$,
thus $I_n$ is an ideal spanned by the polynomials $e_1,\dots,e_n$. We denote by $P_n$ the
canonical projection:
$$
 P_n: {\mathcal R}_n \to {\mathcal R}^*_n.
$$
Observe that ${\mathcal R}^*_n$ inherits from ${\mathcal R}_n$ the structure of  a $\Sym_n$--module, and that the
projection $P_n$ is an intertwining operator.
It is known that the representation  of $\Sym_n$ in ${\mathcal R}^*_n$ is equivalent to the left
regular representation, see e.g.\ \cite{Chevalley}, or \cite{Humphreys} (Section 3.6),

Given a multidegree $p=(p_1,\dots,p_n)$ let $\lambda(p)$ be a partition obtained by rearranging
coordinates of $p$ in non-increasing order. In what follows we write $\lambda<\mu$ for two
partitions $\lambda$ and $\mu$ if $(\lambda_1,\lambda_2,\dots)$ precedes
$(\mu_1,\mu_2,\dots)$ in the lexicographic order. For polynomial
$$f=\sum b_p x^p \in {\mathcal R}_n$$ its
\emph{partition degree} ${\rm pdeg}(f)$ is a minimal partition $\mu$ such that $b_p=0$ each time
$\lambda(p)>\mu$.

 We need the following filtration ${\mathcal F}_n(k),
k=0,1,\dots$ of algebra ${\mathcal R}_n$:
$$
 {\mathcal F}_n(k)=\{f\in{\mathcal R}_n : {\rm pdeg}(f)_1\le k \},
$$
where index 1 refers to the largest part of the partition.
Note that ${\mathcal F}_n(k)$ is a $\Sym_n$ submodule of ${\mathcal R}_n$.
Also, denote
$$
 {\mathcal F}^*_n(k)=P_n({\mathcal F}_n(k))
$$
and observe that ${\mathcal F}_n^*(k)$ is  a $\Sym_n$ submodule of ${\mathcal R}_n^*$.

Next, we want to introduce the so-called Garsia-Stanton descent basis in ${\mathcal R}^*_n$, which
agrees with filtration ${\mathcal F}^*_n(k)$.

 For a permutation $g\in \Sym_n$ let $D(g)$ be the set of its descents and let $d_i(g)=|D(g)\cap
\{i,i+1,\dots,n\}|$. For a standard Young tableau $T$ of shape $\lambda$ with $|\lambda|=n$, let
$D(T)$ be the set of its descents and let $d_i(T)=|D(T)\cap \{i,i+1,\dots,n\}|$.

\emph{The descent monomial} $u_g$ of  permutation $g$ is defined as
$$
 u_g=x_{g(1)}^{d_1(g)} x_{g(2)}^{d_2(g)}\cdots x_{g(n)}^{d_n(g)}.
$$
In our notation ${\rm pdeg}(u_g)=(d_1(g),\dots,d_n(g))$.

\begin{theorem}[\cite{GS, Al, ABR}] \label{Theorem_descent_basis}
 Classes $u_{g}+I_n$, $g\in \Sym_n$, form a linear basis of ${\mathcal R}^*_n$. If  polynomial $f$ belongs
 to $u_{g}+I_n$, then ${\rm pdeg}(f)\ge {\rm pdeg}(u_g)$.
\end{theorem}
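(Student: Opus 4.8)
The plan is to prove the two claims of Theorem \ref{Theorem_descent_basis} as follows. The statement asserts (i) that the classes $u_g+I_n$, $g\in\Sym_n$, form a linear basis of ${\mathcal R}^*_n$, and (ii) a compatibility of these monomials with the partition-degree filtration: any $f\in u_g+I_n$ satisfies ${\rm pdeg}(f)\ge{\rm pdeg}(u_g)$. Since $\dim_{\mathbb C}{\mathcal R}^*_n=|\Sym_n|=n!$ (the coinvariant algebra carries the regular representation, as recalled in the excerpt), and since there are exactly $n!$ descent monomials, one among the two directions of (i) — spanning or linear independence — suffices, and the other follows by a dimension count. I would establish \emph{spanning}, and then deduce linear independence for free.

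**First I would** set up the key ordering: to each monomial $x^p$ associate the pair consisting of the partition $\lambda(p)$ and a secondary tie-breaking datum, and order monomials by a term order refining the lexicographic order on partition degrees. The heart of the argument is to show that modulo $I_n$ every monomial can be rewritten as a combination of descent monomials of partition degree no larger. **First I would** verify the crucial fact that the exponent vectors $(d_1(g),\dots,d_n(g))$ of the descent monomials, as $g$ ranges over $\Sym_n$, are exactly the lattice points one obtains by the following unique decomposition: every exponent vector $p=(p_1,\dots,p_n)$ decomposes uniquely as a sum of a strictly-decreasing staircase-type part recorded by a permutation $g$ (encoding the descent set and relative order of the $x_i$) plus a symmetric, weakly-decreasing remainder. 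Concretely, given $p$, sort the variables to find the permutation $g$ with $p_{g(1)}\ge p_{g(2)}\ge\cdots$ (breaking ties by index), and then $p$ differs from the descent-type vector ${\rm pdeg}(u_g)$ by a partition; this is the standard ``sorting'' decomposition underlying the Garsia–Stanton basis.

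**The main obstacle**, which is where the real content sits, is to convert this numerical decomposition into an identity modulo $I_n$: one must show that the symmetric remainder can be absorbed, so that $x^p$ equals $u_g$ times a symmetric polynomial plus lower (in the term order) monomials, and hence $x^p\equiv$ (combination of descent monomials of partition degree $\le\lambda(p)$) modulo $I_n$. The mechanism is that $I_n$ is generated by $e_1,\dots,e_n$, and multiplying $u_g$ by an elementary symmetric polynomial lands, modulo lower-order terms, on a monomial of strictly larger partition degree; inverting this triangular relation expresses each higher monomial in terms of descent monomials plus $I_n$. I would carry out this triangularity claim by induction on the term order, using the leading-term behavior of multiplication by $e_1,\dots,e_n$, and this simultaneously yields the filtration statement (ii): since in the rewriting of $f\in u_g+I_n$ the descent monomial $u_g$ appears as the term of smallest partition degree, every representative has ${\rm pdeg}(f)\ge{\rm pdeg}(u_g)$.

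**Finally I would** assemble the pieces: the triangular rewriting shows the $n!$ classes $u_g+I_n$ span ${\mathcal R}^*_n$; comparing with $\dim{\mathcal R}^*_n=n!$ forces them to be a basis and to be linearly independent. The filtration compatibility (ii) then follows from the minimality of ${\rm pdeg}(u_g)$ among the partition degrees occurring in the spanning expansion of any $f\equiv u_g$. I expect the delicate bookkeeping in the triangularity/leading-term analysis — precisely tracking how multiplication by $e_k$ shifts partition degrees and ensuring the tie-breaking order makes the transition matrix unitriangular — to be the step requiring the most care; everything else is dimension-counting and the combinatorial identification of descent-monomial exponents.
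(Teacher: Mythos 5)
The paper does not prove this theorem: it is imported by citation from Garsia--Stanton, Allen, and Adin--Brenti--Roichman, so there is no internal proof to compare against. Your plan is essentially the standard straightening argument from those references: sort the exponent vector $p$ by a permutation $g$ (ties broken by index) so that $p - {\rm pdeg}(u_g)$, read along $g$, is a partition $\mu$; observe that $x^p$ is the leading monomial of $u_g\,e_{\mu'}$ in a term order refining the lexicographic order on partition degrees; since $e_{\mu'}\in I_n$ for $\mu\neq\emptyset$, induction on the term order shows every monomial is congruent mod $I_n$ to a combination of descent monomials of no larger partition degree, giving spanning, the filtration statement, and (by the dimension count $\dim\mathcal{R}^*_n=n!$) linear independence. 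This is sound. Two small points of precision: the ``remainder'' $x_{g(1)}^{\mu_1}\cdots x_{g(n)}^{\mu_n}$ is not itself a symmetric polynomial but only the leading monomial of the symmetric polynomial $e_{\mu'}$, and verifying that multiplication of $u_g$ by $e_k$ bumps precisely the exponents of $x_{g(1)},\dots,x_{g(k)}$ in the leading term is the computation that carries the whole argument (you correctly flag it as the delicate step). Also, for claim (ii) the cleaner logic is: any $f\in u_g+I_n$ rewrites, with all partition degrees $\le{\rm pdeg}(f)$, as a combination of descent monomials, which by linear independence must be $u_g$ itself, whence ${\rm pdeg}(u_g)\le{\rm pdeg}(f)$; your phrasing about $u_g$ being ``the term of smallest partition degree'' is slightly off but the conclusion is the same.
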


Theorem \ref{Theorem_descent_basis} implies the following description of ${\mathcal F}^*_n(k)$.
For a class $f^*\in {\mathcal R}^*_n$ we denote by ${\rm pdeg}(f^*)$ the minimum value of ${\rm pdeg}$ on all
polynomials of the class $f^*$. Then
$$
{\mathcal F}^*_n(k) = \{f^*\in {\mathcal R}^*_n : {\rm pdeg}(f^*)_1 \le k\},
$$
and also
$$
{\mathcal F}^*_n(k) = {\rm span}\{u_g : d_1(g) \le k\}.
$$

The following theorem explains the relevance of the filtration ${\mathcal F}^*_n(k)$ to the study
of block characters.

\begin{theorem}
\label{Theorem_descent_representations}
 Let $\pi_k^n$ be the representation of $\Sym_n$ with character $\tau_k^n$. We have the following
 isomorphism of $\Sym_n$-modules:
 $$
  {\mathcal F}^*_n(k)\cong \bigoplus_{i=1}^k \pi_i^n
 $$
 and
 $$
  \pi_k^n\cong {\mathcal F}^*_n(k) / {\mathcal F}^*_n(k-1).
 $$
\end{theorem}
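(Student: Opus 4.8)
The plan is to establish the second isomorphism first and then derive the first one as an immediate consequence via a telescoping argument. The key reduction is the following: by Theorem \ref{Theorem_descent_basis}, the classes $u_g + I_n$ form a basis of $\mathcal{R}^*_n$, and the characterization ${\mathcal F}^*_n(k) = {\rm span}\{u_g : d_1(g) \le k\}$ shows that the quotient ${\mathcal F}^*_n(k)/{\mathcal F}^*_n(k-1)$ has a basis indexed by the descent monomials $u_g$ with $d_1(g) = k$ exactly. Recalling that $d_1(g) = |D(g) \cap \{1,\dots,n\}| = d(g)$ is the total number of descents of $g$, this says $\dim\bigl({\mathcal F}^*_n(k)/{\mathcal F}^*_n(k-1)\bigr)$ equals the number of permutations with exactly $k$ descents, i.e.\ the Eulerian number $\EuNum{n}{k}$, which by Proposition \ref{proposition_dimension_is_euler} matches $\tau_k^n(e) = \dim \pi_k^n$. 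So the dimensions already agree; the real content is to identify the $\Sym_n$-module structure.

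First I would make precise that ${\mathcal F}^*_n(k)$ is genuinely a $\Sym_n$-submodule and that the quotients are $\Sym_n$-modules, which is already noted in the excerpt since $P_n$ intertwines the actions and ${\mathcal F}_n(k)$ is $\Sym_n$-stable (the partition degree is a conjugation-invariant notion because permuting variables only permutes the exponents, leaving $\lambda(p)$ fixed). Then the decomposition of the coinvariant algebra by the degree of the largest part gives a filtration whose associated graded module is $\bigoplus_k {\mathcal F}^*_n(k)/{\mathcal F}^*_n(k-1)$, and since the whole coinvariant algebra ${\mathcal R}^*_n$ carries the regular representation, we know $\bigoplus_{k=1}^n {\mathcal F}^*_n(k)/{\mathcal F}^*_n(k-1) \cong R^n_{\rm reg} \cong \bigoplus_{k=1}^n \pi_k^n$ using the decomposition \eqref{eq_decomposition_of_regular} of the regular representation. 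This global isomorphism together with the dimension count suggests a graded-character comparison.

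The cleanest route is therefore to compute the $\Sym_n$-character of each graded piece ${\mathcal F}^*_n(k)/{\mathcal F}^*_n(k-1)$ and match it to $\tau_k^n = \sum_\lambda m_k(\lambda)\chi^\lambda$. The point is that the descent basis is compatible with standard Young tableaux: one refines the indexing of the $u_g$ with $d(g) = k-1$ (that is, $d_1(g) = k-1$, shifting the convention so that $\tau_k^n$ corresponds to $k-1$ descents as in Proposition \ref{prop_tau_in_irreducibles}) according to the RSK recording tableau, exactly as in the proof of Proposition \ref{proposition_dimension_is_euler}. The multiplicity of $\chi^\lambda$ in the $k$-th graded piece should then come out as $m_k(\lambda)$, the number of standard Young tableaux of shape $\lambda$ with $k-1$ descents. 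Here I would invoke the known fact (the Garsia--Stanton / Adin--Brenti--Roichman theory cited in Theorem \ref{Theorem_descent_basis} and the references \cite{ABR}) that the descent basis organizes the coinvariant algebra into \emph{descent representations} whose irreducible multiplicities are precisely counted by standard Young tableaux with a prescribed descent set; restricting this to the coarse statistic ``number of descents'' yields $m_k(\lambda)$.

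The main obstacle is the module-theoretic identification of the graded piece, not the dimension bookkeeping: showing that ${\mathcal F}^*_n(k)/{\mathcal F}^*_n(k-1)$ as a $\Sym_n$-module is isomorphic to $\pi_k^n$ rather than merely having the same dimension. One must verify that the span of descent monomials with a fixed number of descents, taken modulo the lower filtration level, transforms under $\Sym_n$ with irreducible multiplicities equal to $m_k(\lambda)$. I expect this to rest on the structure theorem for the descent basis (Theorem \ref{Theorem_descent_basis}): the inequality ${\rm pdeg}(f) \ge {\rm pdeg}(u_g)$ for $f \in u_g + I_n$ is what guarantees that passing to the associated graded does not collapse or mix the pieces, so that the graded multiplicity is controlled by the combinatorics of tableaux with a given descent count. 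Once the refined multiplicity count gives $m_k(\lambda)$, comparison with Proposition \ref{prop_tau_in_irreducibles} finishes $\pi_k^n \cong {\mathcal F}^*_n(k)/{\mathcal F}^*_n(k-1)$, and summing the filtration $\bigoplus_{i=1}^k {\mathcal F}^*_n(i)/{\mathcal F}^*_n(i-1) \cong {\mathcal F}^*_n(k)$ yields the first displayed isomorphism.
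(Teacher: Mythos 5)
Your proposal is correct and takes essentially the same route as the paper: both reduce the identification of ${\mathcal F}^*_n(k)/{\mathcal F}^*_n(k-1)$ to the Adin--Brenti--Roichman descent-representation machinery built on Theorem \ref{Theorem_descent_basis} --- the paper cites their graded trace formula and specializes $q_1=q$, $q_2=\dots=q_n=1$, which is exactly your restriction to the number-of-descents statistic --- and then compare the resulting multiplicities $m_k(\lambda)$ with Proposition \ref{prop_tau_in_irreducibles} before telescoping the filtration. The only difference is packaging: you invoke the multiplicity form of the ABR result where the paper invokes the equivalent graded-character form.
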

\noindent
{\bf Remark.} This theorem gives another  way to prove that the dimension of $\pi_k^n$ is $\EuNum{n}{k}$.
\begin{proof}[Proof of Theorem \ref{Theorem_descent_representations}.]
 Let $\bar q =(q_1,q_2,q_3,\dots)$ be a sequence of formal variables. For permutation $s\in\Sym_n$ let ${\rm
 Tr}^{\bar q}_{{\mathcal R}^*_n}(s)$ denote its graded trace in the representation in ${{\mathcal
 R}^*_n}$:
 $$
 {\rm
 Tr}^{\bar q}_{{\mathcal R}^*_n}(s)=\sum_{g\in\Sym_n} \langle s(u_g),u_g  \rangle
 q_1^{d_1(g)}\cdots q_n^{d_n(g)},
 $$
 where $\langle \cdot ,\cdot  \rangle$ is the inner product on ${{\mathcal R}^*_n}$, such that
the elements $u_g$ form an orthonormal basis.
The following formula for the above graded trace was obtained in Section 4.2 of \cite{ABR}.
 $$
 {\rm
 Tr}^{\bar q}_{{\mathcal R}^*_n}(s)=\sum_{\lambda\in {\mathbb Y}_n} \chi^{\lambda}(s)
 \sum_{T\in \SYT(\lambda)} \prod_{i=1}^{n} q_i^{d_i(T)},
 $$
 where $\SYT(\lambda)$ is the set of all standard Young tableau of shape $\lambda$. Now setting
 $q_1=q$, $q_2=\dots=q_n=1$, we obtain the identity
 \begin{equation}
 \label{eq_x4}
 \sum_{g\in\Sym_n} \langle s(u_g),u_g  \rangle
 q^{d_1(g)}= \sum_{\lambda\in {\mathbb Y}_n} \chi^{\lambda}(s)
 \sum_{T\in \SYT(\lambda)} q^{d_1(T)}
 \end{equation}
 Note that
 \begin{equation}
 \label{eq_x5}
  \sum_{\lambda\in {\mathbb Y}_n} \chi^{\lambda}(s)
 \sum_{T\in \SYT(\lambda)} q^{d_1(T)}=\sum_{k=1}^n q^{k-1} \sum_{\lambda} \chi^{\lambda}(s)
 m_k(\lambda)
 \end{equation}
 and
 \begin{equation}
 \label{eq_x6}
  \sum_{g\in\Sym_n} \langle s(u_g),u_g  \rangle
 q^{d_1(g)}=\sum_{k=1}^n q^{k-1} \widetilde \tau^n_k(s),
 \end{equation}
 where $\widetilde \tau^n_k$ is the matrix trace of the representation of $\Sym_n$ in
 ${\mathcal F}^*_n(k) / {\mathcal F}^*_n(k-1)$. Combining \eqref{eq_x4}, \eqref{eq_x5} and
 $\eqref{eq_x6}$ we get
 $$
 \sum_{k=1}^n q^{k-1} \sum_{\lambda} \chi^{\lambda}(s)
 m_k(\lambda)= \sum_{k=1}^n q^{k-1} \widetilde \tau_k^n(s).
 $$
 Therefore,
 \begin{equation}
 \label{eq_x7}
  \widetilde \tau_k^n(s)= \sum_{\lambda} \chi^{\lambda}(s)
 m_k(\lambda).
 \end{equation}
 Comparing \eqref{eq_x7} with Proposition \ref{prop_tau_in_irreducibles} we conclude that
 $\tau^n_k=\widetilde \tau^n_k$.
\end{proof}

\section{Limit shapes}
\label{Section_limit_shapes}

In this section we prove that the decomposition of the characters $\tau^k_n$ into the linear
combination of the characters of irreducible representations of $\Sym_n$ leads to certain limit
shape theorems.

Following \cite{VK_plancherel}, \cite{Biane} given a Young diagram $\lambda$ we construct a
piecewise-linear function $f_\lambda(x)$, $x\in\mathbb R$ with slopes $\pm 1$ as shown at Figure
\ref{fig_Yd_rotated}.

\begin{figure}[h]

\begin{center}

\begin{picture}(300,160)

\thinlines

\put(0,0){\vector(1,0){300}}

\put(150,0){\vector(0,1){150}}

\thicklines

\put(90,60){\line(-1,1){60}}\put(90,60){\line(1,1){20}} \put(110,80){\line(1,-1){20}} \put
(130,60){\line(1,1){40}} \put (170,100){\line(1,-1){40}}\put(210,60){\line(1,1){60} }

\thinlines \put(150,0){\line(1,1){60}} \put(150,0){\line(-1,1){60}} \put(170,20){\line(-1,1){40}}
\put(190,40){\line(-1,1){40}} \put(130,20){\line(1,1){60}} \put(110,40){\line(1,1){20}}

\put(295,5){$x$} \put(155,145){$y$}





\end{picture}
\end{center}

\caption{Young diagram $\lambda=(3,3,1)$ and the graph $y=f_\lambda(x)$ of the corresponding
piecewise-linear function.} \label{fig_Yd_rotated}
\end{figure}

Now let $\chi$ be a character of $\Sym_n$. Recall that irreducible representations of $\Sym_n$ are
parameterized by the Young diagrams with $n$ boxes and write
$$
 \chi(\cdot)=\sum_{\lambda\in\mathbb Y_n} c(\lambda) \chi^{\lambda}(\cdot).
$$
Define
$$
 P(\lambda):=\frac{c(\lambda) \chi^{\lambda}(e)}{\chi(e)},
$$
and note that $P(\lambda)$ are non-negative numbers summing up to $1$. Therefore, $P(\lambda)$
defines a probability distribution on the set $\mathbb Y_n$ or, equivalently, on piecewise-linear
functions. Let us denote by $f^\chi(\cdot)$ the resulting \emph{random} piecewise-linear function.

Biane \cite{Biane} proved the following concentration theorem about the behavior of random
piecewise-linear functions corresponding to characters $\sigma^n_k$.
\begin{theorem}
\label{theorem_Biane}
 Let $n,k\to\infty$ in such a way that $k/\sqrt{n}\to w>0$, then for any $\varepsilon>0$
 $$
  {\rm Prob}\{ \sup_x |f^{\sigma^n_k}(x\sqrt{n})/\sqrt{n} - g_w(x)| >\varepsilon \}\to 0.
 $$
 Here $g_w(x)$ is a deterministic function (depending on $w$).
\end{theorem}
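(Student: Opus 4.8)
The plan is to recognize the random diagram $Y^{\sigma^n_k}$ as governed by the \emph{Schur--Weyl measure} and then to run Biane's free-probability analysis of character asymptotics. By Proposition~\ref{proposition_sigma_in_irreducibles} and $\sigma^n_k(e)=k^n$, the associated distribution is
\[
 P(\lambda)=\frac{s_k(\lambda)\dim(\lambda)}{k^n},\qquad \lambda\in\mathbb Y_n ,
\]
where $s_k(\lambda)$ is simultaneously the number of semistandard tableaux and the dimension ${\rm Dim}_k(\lambda)$ of the corresponding $GL_k$-module; this is precisely the measure read off the Schur--Weyl decomposition \eqref{eq_Schur_Weyl_decomposition}. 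The one computation I would do by hand is the expected normalized character: for a permutation $\sigma$ with nontrivial cycle lengths $m_1,\dots,m_r$ one has $\ell_n(\sigma)=n-\sum_i(m_i-1)$, whence
\[
 \mathbb E_P\!\left[\frac{\chi^\lambda(\sigma)}{\dim(\lambda)}\right]=\frac{\sigma^n_k(\sigma)}{\sigma^n_k(e)}=k^{\ell_n(\sigma)-n}=\prod_{i=1}^r k^{\,1-m_i}.
\]
The crucial feature is that this expectation factorizes \emph{exactly} over the cycles of $\sigma$.

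Next I would pass to the algebra of polynomial functions on Young diagrams and its free-cumulant generators. Writing $\Sigma_m(\lambda)=n(n-1)\cdots(n-m+1)\,\chi^\lambda(\sigma_m)/\dim(\lambda)$ for $\sigma_m$ a single $m$-cycle, one has the Kerov--Olshanski expansion $\Sigma_m=R_{m+1}+(\text{lower weight})$, where $R_{m+1}(\lambda)$ is the $(m+1)$-st free cumulant of the transition measure of $\lambda$. With $k\sim w\sqrt n$ the displayed expectation gives $\mathbb E_P[\Sigma_m]=n(n-1)\cdots(n-m+1)\,k^{1-m}\sim w^{1-m}n^{(m+1)/2}$, so the rescaled cumulants satisfy $\mathbb E_P[R_{m+1}]/n^{(m+1)/2}\to w^{1-m}$. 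Setting $j=m+1$, the target values are $r_j:=w^{2-j}$ ($j\ge 2$), consistent with the identity $R_2\equiv n$, i.e.\ $r_2=1$.

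The heart of the argument is upgrading these expectations to a law of large numbers for the whole diagram. Because the expected normalized character is exactly multiplicative, the mixed contributions are of strictly lower weight: expanding a product $\Sigma_{m_1}\Sigma_{m_2}$ in the $\Sigma$-basis and using $\mathbb E_P[\Sigma_\rho]=\bigl(n(n-1)\cdots(n-|\rho|+1)\bigr)\prod_i k^{1-\rho_i}$, one finds that the leading terms of $\mathbb E_P[\Sigma_m^2]$ and $\mathbb E_P[\Sigma_m]^2$ cancel, so that $\mathrm{Var}_P\bigl(R_j/n^{j/2}\bigr)=O(1/n)\to 0$ for each $j$. Hence every rescaled free cumulant converges in probability to the deterministic value $r_j$. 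The rescaled profiles $x\mapsto f_\lambda(x\sqrt n)/\sqrt n$ are $1$-Lipschitz, and once one knows that $\lambda_1$ and $\lambda'_1$ are $O(\sqrt n)$ with probability tending to $1$, these profiles range in a compact family; convergence of all free cumulants then pins down the limit to the unique shape whose transition measure has free cumulants $(r_j)$.

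Finally I would reconstruct $g_w$ from the limiting cumulants. The free cumulant (inverse Cauchy) transform of the limiting transition measure is
\[
 K_w(z)=\frac1z+\sum_{k\ge1}r_{k+1}z^k=\frac1z+\frac{wz}{\,w-z\,},
\]
a rational function; inverting it to obtain the Cauchy transform and applying the Markov--Krein correspondence \cite{Kerov_book} yields the explicit profile $g_w$, which coincides with Biane's shape \cite{Biane}. I expect the main obstacle to be precisely the concentration above: one must (i) control the variance of every observable $R_j/n^{j/2}$ --- the step where exact multiplicativity of the Schur--Weyl character is indispensable --- and (ii) obtain the a priori tail bound placing the largest row and column at scale $O(\sqrt n)$, so that the established pointwise convergence of $1$-Lipschitz profiles can be promoted to the uniform, in-probability statement asserted in Theorem~\ref{theorem_Biane}.
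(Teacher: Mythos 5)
This statement is not proved in the paper at all: Theorem~\ref{theorem_Biane} is imported verbatim from Biane \cite{Biane}, so there is no internal argument to compare against. Your sketch is, in essence, a faithful reconstruction of Biane's own proof: the identification of $P(\lambda)=s_k(\lambda)\dim(\lambda)/k^n$ with the Schur--Weyl measure, the exact multiplicativity of the expected normalized character over cycles, the passage to the Kerov--Olshanski observables $\Sigma_m=R_{m+1}+(\text{lower weight})$, the resulting limits $\mathbb E[R_j]/n^{j/2}\to w^{2-j}$, and the cancellation of leading terms in $\mathrm{Var}(\Sigma_m)$ coming from $n^{\downarrow(m_1+m_2)}/\bigl(n^{\downarrow m_1}n^{\downarrow m_2}\bigr)=1+O(1/n)$ are exactly the ``approximate factorization'' mechanism of \cite{Biane}, and your $K_w(z)=\tfrac1z+\tfrac{wz}{w-z}$ correctly identifies the limiting transition measure as a free compound Poisson (Marchenko--Pastur type) law. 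The two points you flag as remaining obstacles are indeed the only substantive gaps: the variance estimate must be carried out for all $j$ (and for products, to control the lower-weight corrections uniformly), and the $O(\sqrt n)$ support bound is needed to pass from moment convergence of transition measures to uniform convergence of profiles via the Markov--Krein correspondence. Note that $\lambda_1'\le k\sim w\sqrt n$ is automatic here since $s_k(\lambda)=0$ for diagrams with more than $k$ rows, so only the first-row bound requires work. In short: correct outline, same route as the cited source, with the acknowledged technical steps being precisely the content of \cite{Biane}.
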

The explicit formulas for the functions $g_w(x)$ are quite involved, they can be found in
\cite[Section 3]{Biane}. As $w\to\infty$ the curves $g_w(x)$ approach the celebrated
Vershik-Kerov-Logan-Shepp curve, which is a limit shape for the Plancherel random Young diagrams,
see \cite{VK_plancherel}, \cite{LS}.

It turns out that the limit behavior of the random functions corresponding to the extreme block
characters $\tau^n_k$ is described by the very same curves $g_w(x)$.

\begin{theorem}
\label{theorem_limit_shape}
 If $n,k\to\infty$ in such a way that $k/\sqrt{n}\to w>0$, then for any $\varepsilon>0$
 $$
  {\rm Prob}\left\{ \sup_x |f^{\tau^n_k}(x\sqrt{n})/{\sqrt{n}} - g_w(x)| >\varepsilon \right\}\to 0.
 $$
 If $n,k\to\infty$ in such a way that $(n-k)/\sqrt{n}\to w>0$, then for any $\varepsilon>0$
 $$
  {\rm Prob}\left\{ \sup_x |f^{\tau^n_k}(x\sqrt{n})/\sqrt{n} - g_w(-x)| >\varepsilon \right\}\to 0.
 $$
\end{theorem}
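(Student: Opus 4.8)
The plan is to deduce Theorem~\ref{theorem_limit_shape} from Biane's concentration result (Theorem~\ref{theorem_Biane}) together with the inversion relation between the characters $\sigma^n_k$ and $\tau^n_k$. The key structural fact, already established in the excerpt, is formula \eqref{eq_decomp_of_sigma}, which expresses $\sigma^n_k$ as a nonnegative combination $\sigma_k^n=\sum_{j=0}^{k-1}\binom{n+j}{j}\tau^n_{k-j}$. Passing to the induced probability measures $P^{\sigma}$ and $P^{\tau}$ on $\mathbb Y_n$, this means the random diagram $Y^{\sigma^n_k}$ is a \emph{mixture} of the diagrams $Y^{\tau^n_{k-j}}$, with mixing weights proportional to $\binom{n+j}{j}\tau^n_{k-j}(e)=\binom{n+j}{j}\EuNum{n}{k-j}$ (using Proposition~\ref{proposition_dimension_is_euler}).

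\emph{First} I would analyze these mixing weights in the scaling regime $k\sim w\sqrt n$. The weight attached to the index $k-j$ is $\binom{n+j}{j}\EuNum{n}{k-j}$; I expect to show that, after normalization, this distribution over $j$ concentrates so sharply that almost all the mass sits on indices $k-j$ that are themselves of order $w'\sqrt n$ with $w'\to w$. Concretely, $\binom{n+j}{j}\approx e^{j^2/2n}$ grows mildly for $j=O(\sqrt n)$, while the Eulerian numbers $\EuNum{n}{k-j}$ vary on the scale where $(k-j)/\sqrt n$ changes by a constant; a Laplace/saddle-point estimate should pin the effective range of $j$ to $o(\sqrt n)$, so that the dominant $\tau$-components all have parameter $\to w$. \emph{Then} the triangle inequality, combined with Biane's theorem applied to $\sigma^n_k$ and the fact that each contributing $Y^{\tau^n_{k'}}$ with $k'/\sqrt n\to w$ has the same limit shape $g_w$, forces $f^{\tau^n_k}$ itself to concentrate on $g_w$. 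The cleanest route is probably a direct comparison: since $Y^{\sigma^n_k}$ concentrates on $g_w$ by Theorem~\ref{theorem_Biane}, and $Y^{\sigma^n_k}$ is a mixture dominated by components $Y^{\tau^n_{k'}}$ with $k'\sim w\sqrt n$, the individual component $Y^{\tau^n_k}$ must also concentrate on $g_w$ — otherwise some component would pull the mixture away from $g_w$, contradicting the concentration of $\sigma$.

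For the second regime $(n-k)/\sqrt n\to w$, I would invoke the duality of Proposition~\ref{proposition_tau_duality}: tensoring $\pi^n_k$ with the sign representation sends $\tau^n_k$ to $\tau^n_{n+1-k}$ and replaces each $\lambda$ by its transpose $\lambda'$. Transposition reflects the Young diagram about the main diagonal, which in the rescaled coordinates of Figure~\ref{fig_Yd_rotated} is exactly the reflection $x\mapsto -x$ of the profile. Hence the limit shape for $\tau^n_k$ with $n-k\sim w\sqrt n$ is $g_w(-x)$, as claimed.

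\emph{The hard part will be} making the deduction of the individual concentration from the mixture concentration rigorous. Concentration of a mixture does not automatically imply concentration of each summand; I must use that \emph{all} the significantly-weighted components share the \emph{same} target $g_w$ and that $Y^{\tau^n_k}$ itself carries a positive, non-vanishing fraction of the mixture mass (which follows since its weight $\binom{n}{0}\EuNum{n}{k}$ is the $j=0$ term and is comparable to the total). The careful step is therefore the two-sided estimate showing both that $Y^{\tau^n_k}$ is not a negligible part of $Y^{\sigma^n_k}$ and that no non-negligible part of the mixture lives at a \emph{different} limit shape; equivalently, one shows the family $\{g_{w'}\}$ depends continuously on $w'$ and the effective spread of $w'$ across the mixture shrinks to zero, so the whole mixture — and in particular its dominant $\tau^n_k$ component — collapses onto $g_w$.
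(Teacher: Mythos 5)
Your proposal is correct and reaches the paper's conclusion by what is essentially the mirror image of its argument. The paper inverts in the other direction: it uses \eqref{eq_x1} (i.e.\ Definition \ref{eq_def_tau}) to write ${\rm Prob}\{f^{\tau^n_k}=f_\lambda\}=\sum_{j} c_{j,k,n}\,{\rm Prob}\{f^{\sigma^n_{k-j}}=f_\lambda\}$ with \emph{signed} coefficients $c_{j,k,n}=(-1)^j\binom{n+1}{j}(k-j)^n/\EuNum{n}{k}$ summing to $1$, and shows $c_{0,k,n}\to 1$ while $|c_{j,k,n}|$ for $j\ge 1$ is killed by $(1-1/k)^{jn}=e^{-j\sqrt n/w(1+o(1))}$ beating $(n+1)^j$; this gives total-variation closeness of the $\tau^n_k$- and $\sigma^n_k$-measures, after which Biane's Theorem \ref{theorem_Biane} and the sign-duality of Proposition \ref{proposition_tau_duality} finish exactly as you describe. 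Your version, based on the positive decomposition \eqref{eq_decomp_of_sigma}, buys a genuinely probabilistic statement (the $\sigma$-measure is an honest mixture of $\tau$-measures), and then you need strictly less than you think: once the $j=0$ weight $p_0=\EuNum{n}{k}/k^n$ is bounded below, the inequality $p_0\,{\rm Prob}_{\tau^n_k}(A^c)\le {\rm Prob}_{\sigma^n_k}(A^c)$ already forces concentration of the component, so the clause about ``no non-negligible part of the mixture living at a different limit shape'' is not needed and the near-circular appeal to all components $\tau^n_{k'}$ sharing the limit shape $g_w$ should be dropped. Two caveats on the estimates: your heuristic $\binom{n+j}{j}\approx e^{j^2/2n}$ is wrong (for $j$ of order $\sqrt n$ this binomial is of size roughly $(en/j)^j$, enormous), but harmless, since the Eulerian ratio still wins; and the lower bound on $p_0$ is not free --- you should derive it from Worpitzky's identity $k^n=\sum_j\binom{n+j}{j}\EuNum{n}{k-j}$, which gives $\EuNum{n}{k-j}\le (k-j)^n$ and hence $\EuNum{n}{k}\ge k^n\bigl(1-\sum_{j\ge1}(n+j)^j(1-1/k)^{jn}\bigr)\to k^n$, i.e.\ $p_0\to1$ --- at which point your computation coincides with the paper's.
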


The proof of Theorem \ref{theorem_limit_shape} is based on the following observation.

\begin{proposition}
 The following estimate holds:
 $$
  \sum_{\lambda\in \mathbb Y_n}\left|{\rm Prob}\{f^{\tau^n_k} = f_\lambda\}-{\rm Prob}\{f^{\sigma^n_k} = f_\lambda\}
  \right|<c(k,n),
 $$
 where  $c(k,n)\to 0$ as $n,k\to\infty$ in such a way that $k/\sqrt{n}\to w>0$.
\end{proposition}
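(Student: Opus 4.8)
The plan is to recognize the random diagram $f^{\sigma_k^n}$ as a \emph{mixture} of the diagrams $f^{\tau_i^n}$, $i=1,\dots,k$, and to show that nearly all the weight of this mixture is carried by the single index $i=k$. For a character $\chi=\sum_\lambda c(\lambda)\chi^\lambda$ write $P^\chi(\lambda):={\rm Prob}\{f^\chi=f_\lambda\}=c(\lambda)\chi^\lambda(e)/\chi(e)$. By Propositions \ref{proposition_sigma_in_irreducibles} and \ref{prop_tau_in_irreducibles} the coefficients are $c(\lambda)=s_k(\lambda)$ for $\sigma_k^n$ and $c(\lambda)=m_i(\lambda)$ for $\tau_i^n$, and $\sigma_k^n(e)=k^n$. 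Substituting the inversion formula \eqref{eq_x2}, rewritten as $s_k(\lambda)=\sum_{i=1}^k\binom{n+k-i}{k-i}m_i(\lambda)$, and using $m_i(\lambda)\chi^\lambda(e)=\tau_i^n(e)\,P^{\tau_i^n}(\lambda)$, I would obtain
$$
P^{\sigma_k^n}(\lambda)=\sum_{i=1}^k w_i\,P^{\tau_i^n}(\lambda),\qquad w_i:=\frac{\binom{n+k-i}{k-i}\,\tau_i^n(e)}{k^n}.
$$
These weights are nonnegative and sum to $1$, as is seen by evaluating \eqref{eq_decomp_of_sigma} at the identity (a form of Worpitzky's identity $k^n=\sum_i\binom{n+k-i}{k-i}\tau_i^n(e)$); since $\binom{n}{0}=1$, the top weight is $w_k=\tau_k^n(e)/k^n$.

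Once the mixture is in hand, the desired sum is controlled by one line. Writing $P^{\sigma_k^n}-P^{\tau_k^n}=\sum_{i<k}w_iP^{\tau_i^n}-(1-w_k)P^{\tau_k^n}$ and using that each $P^{\tau_i^n}$ is a probability measure, the triangle inequality gives
$$
\sum_{\lambda\in\mathbb Y_n}\bigl|P^{\sigma_k^n}(\lambda)-P^{\tau_k^n}(\lambda)\bigr|\le\sum_{i<k}w_i+(1-w_k)=2(1-w_k).
$$
Thus one may take $c(k,n)=2(1-w_k)$ (enlarged by an arbitrarily small positive amount to make the inequality strict), and it remains only to prove $w_k\to1$.

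For this final estimate I would expand $w_k=\tau_k^n(e)/k^n$ from its definition. Combining Definition~\ref{eq_def_tau} with $\sigma_m^n(e)=m^n$ gives the alternating sum
$$
w_k=\frac1{k^n}\sum_{j=0}^{k-1}(-1)^j\binom{n+1}{j}(k-j)^n=\sum_{j=0}^{k-1}(-1)^j\binom{n+1}{j}\Bigl(1-\tfrac jk\Bigr)^n,
$$
whose $j=0$ term equals $1$. Bounding the remaining terms by $\binom{n+1}{j}\le(n+1)^j/j!$ and $(1-j/k)^n\le e^{-nj/k}$ yields
$$
|w_k-1|\le\sum_{j\ge1}\frac1{j!}\bigl((n+1)e^{-n/k}\bigr)^j=\exp\!\bigl((n+1)e^{-n/k}\bigr)-1.
$$
In the regime $k/\sqrt n\to w>0$ one has $n/k\to\infty$ of order $\sqrt n/w$, so the exponential $e^{-n/k}$ overwhelms the polynomial factor $n+1$ and $(n+1)e^{-n/k}\to0$; hence $w_k\to1$ and $c(k,n)\to0$. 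The single genuine point in the argument is this last estimate: the statement holds precisely because the scaling $k=O(\sqrt n)$ forces the super-exponential decay of $(n+1)e^{-n/k}$, whereas the reduction to a convex mixture and the triangle-inequality bound are purely formal.
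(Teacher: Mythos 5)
Your argument is correct, and it runs in the opposite direction to the paper's. The paper starts from the signed expansion $\tau_k^n=\sum_j(-1)^j\binom{n+1}{j}\sigma_{k-j}^n$, so that ${\rm Prob}\{f^{\tau^n_k}=f_\lambda\}=\sum_j c_{j,k,n}{\rm Prob}\{f^{\sigma^n_{k-j}}=f_\lambda\}$ with coefficients of both signs summing to $1$; it must then bound every $|c_{j,k,n}|$ for $j\ge 1$ and deduce $|c_{0,k,n}-1|$ small before assembling the total-variation estimate. You instead use the inverse identity \eqref{eq_decomp_of_sigma}, which exhibits $P^{\sigma_k^n}$ as a genuine convex mixture $\sum_{i\le k} w_i P^{\tau_i^n}$ with $w_i\ge 0$, $\sum_i w_i=1$ and $w_k=\EuNum{n}{k}/k^n$; positivity makes the bound $\sum_\lambda|P^{\sigma_k^n}-P^{\tau_k^n}|\le 2(1-w_k)$ a one-line triangle inequality, and the whole proof reduces to the single asymptotic claim $\EuNum{n}{k}/k^n\to 1$. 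The analytic core is the same in both proofs --- the estimate $\binom{n+1}{j}(1-j/k)^n\le \bigl((n+1)e^{-n/k}\bigr)^j/j!$ with $(n+1)e^{-n/k}\to 0$ in the regime $k\sim w\sqrt n$ --- but your convex-mixture formulation buys a cleaner bookkeeping (no signed coefficients to control term by term) at no extra cost, while the paper's version has the minor advantage of directly expressing the $\tau$-measure, the object of interest, in terms of the $\sigma$-measures governed by Biane's theorem. All the individual steps you use (the re-indexing of \eqref{eq_x2}, Worpitzky's identity for $\sum_i w_i=1$, and the final decay estimate) check out.
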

\begin{proof}
 Proposition \ref{prop_tau_in_irreducibles} and Proposition \ref{proposition_dimension_is_euler}
 imply that
 $$
 {\rm Prob}\{f^{\tau^n_k}=f_\lambda\}=\frac{m_k(\lambda)
 \dim(\lambda)}{\EuNum{n}{k}},
 $$
 while Proposition \ref{proposition_sigma_in_irreducibles} yields
$$
 {\rm Prob}\{f^{\sigma^n_k}=f_\lambda\}=\frac{s_k(\lambda)
 \dim(\lambda)}{k^n}.
 $$
 Then the equality \eqref{eq_x1} implies that
 \begin{equation}
  \label{eq_x8}
  {\rm Prob}\{f^{\tau^n_k}=f_\lambda\}=\sum_{j=0}^{k-1} c_{j,k,n}{\rm Prob}\{f^{\sigma^n_{k-j}}=f_\lambda\},
 \end{equation}
$$
 c_{j,k,n} = \frac{(-1)^j {{n+1} \choose {j}} (k-j)^n} {\EuNum{n}{k}}
$$
Let us analyse the coefficients $c_{j,k,n}$.  We have
$$
|c_{j,k,n}|=c_{0,k,n} {{n+1} \choose {j}}(1-j/k)^n<c_{0,k,n} (n+1)^j(1-1/k)^{jn}
$$
Now if $n,k\to\infty$ in such a way that $k/\sqrt{n}\to w>0$ and $j<k$, then for large enough $n$,
$$
 (n+1)^j(1-1/k)^{jn}=\left( (n+1) (1-1/k)^n \right)^j< \exp\left(-\frac{1}{2w}\sqrt{n} j\right)
$$
On the other hand, summing \eqref{eq_x8} over all $\lambda\in\mathbb Y_n$ we conclude that
$$
 \sum_{j=0}^{k-1} c_{j,k,n} = 1.
$$
Therefore, for large enough $n$ we have
$$
 |c_{0,k,n}-1|<\exp\left(-\frac{1}{3w}\sqrt{n}\right)
$$
and for $0<j<k$ we have
$$
  |c_{j,k,n}|<\exp\left(-\frac{1}{3w}\sqrt{n}\right).
$$
Hence,
\begin{multline*}
 \sum_{\lambda\in \mathbb Y_n}\left|{\rm Prob}\{f^{\tau^n_k}=f_\lambda\}-{\rm Prob}\{f^{\sigma^n_k}=f_\lambda\}
  \right|\\<
\exp\left(-\frac{1}{3w}\sqrt{n}\right) \sum_{j=0}^{k-1} \sum_{\lambda\in \mathbb Y_n} {\rm
Prob}\{f^{\sigma^n_j}=f_\lambda\}= k\exp\left(-\frac{1}{3w}\sqrt{n}\right),
\end{multline*}
which vanishes as $n\to\infty$.
\end{proof}
Now Theorem \ref{theorem_limit_shape} becomes a simple corollary of Theorem \ref{theorem_Biane}.
\begin{proof}[Proof of Theorem \ref{theorem_limit_shape}]
 First, suppose that $n,k\to\infty$ in such a way that $k/\sqrt{n}\to w>0$,
then for any $\varepsilon>0$
 \begin{multline*}
  {\rm Prob}\left\{ \sup_x |f^{\tau^n_k}(x\sqrt{n})/\sqrt{n} - g_w(x)| >\varepsilon \right\}\\\le
  {\rm Prob}\left\{ \sup_x |f^{\sigma^n_k}(x\sqrt{n})/\sqrt{n} - g_w(x)| >\varepsilon\right\}\\+
   \sum_{\lambda\in \mathbb Y_n}\left|{\rm Prob}\{f^{\tau^n_k}=f_\lambda\}-{\rm Prob}\{f^{\sigma^n_k}=f_\lambda\}
  \right|
  \to 0.
 \end{multline*}
 Next, suppose that $n,k\to\infty$ in such a way that $(n-k)/\sqrt{n}\to w>0$. Proposition
 \ref{proposition_tau_duality} yields that the distributions of $f^{\tau^n_k}(x)$ and
 $f^{\tau^n_{n-k}}(-x)$ coincide. Hence we get the second claim of Theorem
 \ref{theorem_limit_shape}.
\end{proof}

\section{Branching rules}
\label{Section_branching}

Let us embed $\Sym_{n-1}$ into $\Sym_n$ as the subgroup of permutations fixing $n$. For  a central
function $\chi$ on $\Sym_n$ the restriction ${\rm Res}_{n-1}(\chi)$ on $\Sym_{n-1}$
is a central function on $\Sym_{n-1}$. If $\chi$ is a block function, then so is the restriction
${\rm Res}_{n-1} \chi$. The following two propositions describe what happens with the characters
$\sigma_k^n,\, \tau_k^n$ by restricting them to the subgroup; we call these formulas \emph{the
branching rules}.

\begin{proposition} For $1\leq k\leq n$ we have
$$
 {\rm Res}_{n-1}\sigma_k^n  = k \sigma_k^{n-1},
$$
$$
 {\rm Res}_{n-1} \widehat \sigma_k^n = k \widehat\sigma_k^{n-1}.
$$
In terms of the normalized characters, the relations can be rewritten as:
$$
 {\rm Res}_{n-1} \left(\frac{\sigma_k^n}{\sigma_k^n(e)}\right)  = \frac{\sigma_k^{n-1}}{\sigma_k^{n-1}(e)},
$$
$$
 {\rm Res}_{n-1} \left(\frac{\widehat \sigma_k^n}{\widehat \sigma_k^n(e)}\right)  = \frac{\widehat \sigma_k^{n-1}}{\widehat
 \sigma_k^{n-1}(e)}.
$$
\end{proposition}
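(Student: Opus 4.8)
The plan is to reduce the branching rule to a single cycle-counting observation, using the closed forms $\sigma_k^n(g)=k^{\ell_n(g)}$ and $\widehat\sigma_k^n(g)=(-1)^n(-k)^{\ell_n(g)}$ from the opening Proposition. First I would fix the embedding: a permutation $g\in\Sym_{n-1}$ is regarded as an element of $\Sym_n$ that fixes the point $n$. The key remark is that under this embedding the point $n$ becomes an extra singleton cycle, disjoint from the cycles of $g$ acting on $\{1,\dots,n-1\}$, so that $\ell_n(g)=\ell_{n-1}(g)+1$ for every $g\in\Sym_{n-1}$. This identity is the only nontrivial input, and it is immediate from the definition of $\ell_n$ as the number of cycles.

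Granting this, the first branching rule is a one-line substitution: for $g\in\Sym_{n-1}$ I would write $({\rm Res}_{n-1}\sigma_k^n)(g)=k^{\ell_n(g)}=k\cdot k^{\ell_{n-1}(g)}=k\,\sigma_k^{n-1}(g)$. For the signed version the same cycle identity gives $({\rm Res}_{n-1}\widehat\sigma_k^n)(g)=(-1)^n(-k)^{\ell_{n-1}(g)+1}=(-1)^n(-k)\cdot(-k)^{\ell_{n-1}(g)}$, and since $(-1)^n(-k)=(-1)^{n-1}k$ this equals $(-1)^{n-1}k\,(-k)^{\ell_{n-1}(g)}=k\,\widehat\sigma_k^{n-1}(g)$, as claimed. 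The only point requiring a moment's care is the sign bookkeeping in this second computation, but it amounts to absorbing one factor of $-k$ and rewriting $(-1)^n$ as $(-1)^{n-1}$; there is no real obstacle.

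Finally the normalized statements follow formally from the unnormalized ones. Evaluating the closed forms at the identity gives $\sigma_k^n(e)=k^{\ell_n(e)}=k^n=k\,\sigma_k^{n-1}(e)$, and likewise $\widehat\sigma_k^n(e)=k\,\widehat\sigma_k^{n-1}(e)$, so the factor of $k$ produced by restriction cancels exactly against the ratio of dimensions. Concretely I would compute ${\rm Res}_{n-1}\bigl(\sigma_k^n/\sigma_k^n(e)\bigr)=(k\,\sigma_k^{n-1})/(k\,\sigma_k^{n-1}(e))=\sigma_k^{n-1}/\sigma_k^{n-1}(e)$, and the analogous cancellation for $\widehat\sigma_k^n$, completing the proof.
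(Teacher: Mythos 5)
Your proof is correct and follows exactly the paper's route: the paper's entire proof is the one-line remark that $\ell_{n+1}(g)=\ell_n(g)+1$ under the standard embedding, which is precisely your key observation that the added fixed point contributes an extra singleton cycle. Your sign bookkeeping for $\widehat\sigma_k^n$ and the cancellation in the normalized statements are both accurate; you have simply written out the details the paper leaves implicit.
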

\begin{proof}
This immediately follows from $\ell_{n+1}(g)=\ell_n(g)+1$.
\end{proof}

\begin{proposition} For $1<k<n$ we have \label{proposition_branching_tau}
$$
 {\rm Res}_{n-1} \tau_k^n  = k \tau_k^{n-1} +(n-k+1) \tau_{k-1}^{n-1};
$$
$$
 {\rm Res}_{n-1} \tau_1^n= \tau_1^{n-1},\quad  {\rm Res}_{n-1} \tau_n^n= \tau_{n-1}^{n-1}.
$$
\end{proposition}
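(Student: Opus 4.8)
The plan is to prove the branching rule for $\tau_k^n$ by translating it into a statement about the $\sigma$-characters, for which branching is trivial, and then inverting. Since the transformation between $\{\sigma_k^n\}$ and $\{\tau_k^n\}$ is explicit (Definition \ref{eq_def_tau} and Proposition \ref{sigma_tau}), and since restriction commutes with taking linear combinations, I expect the whole statement to reduce to a binomial-coefficient identity.

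First I would compute ${\rm Res}_{n-1}\tau_k^n$ directly from Definition \ref{eq_def_tau}. Using ${\rm Res}_{n-1}\sigma_m^n=m\,\sigma_m^{n-1}$ from the previous proposition, I get
$$
 {\rm Res}_{n-1}\tau_k^n=\sum_{j=0}^{k-1}(-1)^j\binom{n+1}{j}(k-j)\,\sigma_{k-j}^{n-1}.
$$
The target of the computation is $k\tau_k^{n-1}+(n-k+1)\tau_{k-1}^{n-1}$, so the second step is to expand this target in the $\sigma^{n-1}$-basis, again using Definition \ref{eq_def_tau} (now at level $n-1$, so the binomial coefficients become $\binom{n}{j}$). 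Writing both sides as linear combinations of $\sigma_1^{n-1},\dots,\sigma_k^{n-1}$, and matching the coefficient of $\sigma_{k-j}^{n-1}$ for each $j$, reduces the proposition to a single numerical identity relating $\binom{n+1}{j}$ to a combination of $\binom{n}{j}$ and $\binom{n}{j-1}$. The main obstacle, such as it is, will be bookkeeping the index shifts correctly: in the term $k\tau_k^{n-1}$ the coefficient of $\sigma_{k-j}^{n-1}$ comes from the summation index $j$, whereas in $(n-k+1)\tau_{k-1}^{n-1}$ the same power $\sigma_{k-j}^{n-1}$ arises from summation index $j-1$, so the two contributions must be aligned before comparison.

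Carrying this out, the coefficient of $\sigma_{k-j}^{n-1}$ on the left is $(-1)^j\binom{n+1}{j}(k-j)$, while on the right it is
$$
 (-1)^j\binom{n}{j}k+(-1)^{j-1}\binom{n}{j-1}(n-k+1).
$$
So everything comes down to verifying
$$
 \binom{n+1}{j}(k-j)=\binom{n}{j}k-\binom{n}{j-1}(n-k+1),
$$
which follows from Pascal's rule $\binom{n+1}{j}=\binom{n}{j}+\binom{n}{j-1}$ together with the absorption identity $j\binom{n+1}{j}=(n+1)\binom{n}{j-1}$; substituting and collecting terms in $k$ and in the constant makes both sides agree. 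The boundary cases $k=1$ and $k=n$ I would check separately (or read off from the general computation by noting that the out-of-range terms $\tau_0^{n-1}$ and $\tau_n^{n-1}$ carry coefficient zero), giving ${\rm Res}_{n-1}\tau_1^n=\tau_1^{n-1}$ and ${\rm Res}_{n-1}\tau_n^n=\tau_{n-1}^{n-1}$ as stated.

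Alternatively, and perhaps more transparently, one could prove the identity representation-theoretically using Proposition \ref{prop_tau_in_irreducibles}: the multiplicity of $\chi^\lambda$ in $\tau_k^n$ is the number $m_k(\lambda)$ of standard Young tableaux of shape $\lambda$ with $k-1$ descents, and ordinary branching for $\Sym_n\downarrow\Sym_{n-1}$ removes one box. One would then analyze how the descent count $d(T)$ of a standard tableau $T$ of shape $\lambda$ changes upon deleting the box containing $n$, separating into the case where $n$ lies at the end of its row (no descent at position $n-1$) and where it lies below (a descent at $n-1$), and count how each outcome feeds into $m_k$ and $m_{k-1}$ at level $n-1$. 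This is conceptually appealing but the combinatorial case analysis is more delicate than the generating-function bookkeeping, so I would present the $\sigma$-inversion argument as the main proof.
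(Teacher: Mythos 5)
Your argument is correct and is essentially the paper's own proof: both restrict term-by-term via ${\rm Res}_{n-1}\sigma_m^n=m\,\sigma_m^{n-1}$ and reduce the claim to the binomial identity $\binom{n+1}{j}(k-j)=k\binom{n}{j}-(n-k+1)\binom{n}{j-1}$, which is exactly the paper's displayed identity after the reindexing $j\mapsto k-j$. One small correction to your parenthetical on the boundary cases: the out-of-range terms $\tau_0^{n-1}$ and $\tau_n^{n-1}$ appear with the nonzero coefficients $n-k+1=n$ and $k=n$ respectively; what saves the day is that these block functions themselves vanish identically ($\tau_0^{n-1}$ is an empty sum, and $\tau_n^{n-1}=0$ because no standard tableau with $n-1$ boxes has $n-1$ descents, or equivalently by the surjection-counting identity $\sum_{j=0}^{n}(-1)^j\binom{n}{j}(n-j)^{\ell}=0$ for $\ell\le n-1$), so your ``check separately'' fallback is the right way to phrase it.
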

\begin{proof}
Observe the binomial coefficients identity
$$
 {{n+1} \choose {k-j}} j= k {n\choose k-j} - (n-k+1) {n \choose k-j-1}.
$$
Indeed, this is equivalent to
\begin{multline*}
 \frac{n!}{(k-j)!(n+1-k+j)!} (n+1)j \\= \frac{n!}{(k-j)!(n+1-k+j)!}\left(k (n+1-k+j)  -
  (n-k+1) (k-j) \right),
\end{multline*}
which is easy to check.

Then using the definition  of $\tau_k^n$ and the branching rule for $\sigma_k^n$ we obtain
\begin{multline*}
 {\rm Res}_{n-1}\tau_k^n =\sum_{j=1}^{k} (-1)^{k-j} {{n+1} \choose {k-j}} {\rm
 Res}_{n-1}(\sigma^n_{j})=
 \sum_{j=1}^{k} (-1)^{k-j} {{n+1} \choose {k-j}} j\sigma^{n-1}_{j}\\= \sum_{j=1}^{k} (-1)^{k-j}
 \left(k {n\choose k-j} - (n-k+1) {n \choose k-j-1}\right) \sigma^{n-1}_{j} \\= k \tau_k^{n-1} +(n-k+1) \tau_{k-1}^{n-1},
\end{multline*}
as wanted.
\end{proof}

\noindent
{\bf Remark.} Proposition \ref{proposition_branching_tau} suggests another way to prove Proposition
\ref{proposition_dimension_is_euler}. Indeed, evaluating at $e$ we obtain the relation
$$
 \tau_k^n(e)  = k \tau_k^{n-1}(e) +(n-k+1) \tau_{k-1}^{n-1}(e),
$$
which coincides with the well-known recursion for Eulerian numbers.

\section{The Choquet simplex of block characters of $\Sym_\infty$}
\label{Section_simplex_infinite_n}

We proceed with classification of block characters of the infinite symmetric group $\Sym_\infty$.
Let us recall the definition of the group. Let $\mathbb Z_+$ denote the set of positive integers.
Realize the group $\Sym_n$ as the group of bijections $g:{\mathbb Z}_+\to{\mathbb Z}_+$ which may
only move the first $n$ integers, that is satisfy $g(j)=j$ for $j>n$. This yields a natural
embedding $\Sym_n\subset\Sym_{n+1}$ and allows one to introduce the infinite  symmetric group as
an inductive limit
$$\Sym_\infty:=\bigcup_{n\geq 1}\Sym_n,$$
whose elements are the bijections  $g:{\mathbb Z}_+\to{\mathbb Z}_+$ satisfying $g(j)=j$ for all
sufficiently large $j$.

The notion of a block character  for the group $\Sym_\infty$ needs to be adapted.
 We call the
statistic $c(g):=n-\ell_n(g)$ the {\it decrement} of permutation $g\in\Sym_n$. Note that the
concept is consistent with embeddings, that is considering $g$ as an element of $\Sym_{n+1}$, the
decrement remains the same:\ $n-\ell_n(g)=n+1-\ell_{n+1}(g)$. Thus, $c(g)$ is a well defined
function on $\Sym_\infty$.

A block function on $\Sym_\infty$ is defined as a function which depends on a permutation through
its decrement. A positive definite normalized block function will be called normalized block
character of $\Sym_\infty$. Two trivial examples are the unit character and the delta function at
$e$.

We introduce next the analogues of characters $\sigma^n_k$ and $\widehat \sigma^n_k$. Denote
$$
 {\mathbb V}=\{0,\pm 1,\pm 1/2\pm 1/3,\pm 1/4, \dots\}
$$
and set
\begin{equation}\label{sigma_z}
\sigma^\infty_z(g):=z^{c(g)}, ~~~ {\rm ~~for~~} z\in {\mathbb V},
\end{equation}
where $\sigma^{\infty}_0$ is understood as the delta-function at $e$
$$ \sigma^{\infty}_{0}(g)=\begin{cases}1,\quad g=e,\\ 0,\text{ otherwise} \end{cases}.$$

\noindent
Keep in mind that in our parameterization $\sigma^\infty_{1/k}$ is a counterpart of $\sigma_k^n$
and $\sigma^\infty_{-1/k}$ is an analogue of $\widehat \sigma_k^n$, for $k=1,2,\dots$.

\begin{proposition}
 Functions $\sigma^{\infty}_{z},\, z\in {\mathbb V}$ are
 normalized block characters of $\Sym_\infty$.
\end{proposition}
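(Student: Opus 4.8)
The plan is to verify the three defining requirements separately, with positive-definiteness being the only nontrivial point. That $\sigma^\infty_z$ is a block function is immediate from its definition, since it depends on $g$ only through the decrement $c(g)$; and it is normalized because $c(e)=0$ gives $\sigma^\infty_z(e)=z^0=1$ for $z\neq 0$, while the delta function $\sigma^\infty_0$ satisfies $\sigma^\infty_0(e)=1$ by convention.

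For positive-definiteness I would exploit that this property is local: any finite collection $(g_i)$ of elements of $\Sym_\infty$ is contained in some finite $\Sym_n$, and then the products $g_i g_j^{-1}$ lie in $\Sym_n$ as well, so the matrix $(\sigma^\infty_z(g_i g_j^{-1}))$ is exactly the corresponding matrix for the restriction $\sigma^\infty_z|_{\Sym_n}$. Hence $\sigma^\infty_z$ is positive-definite on $\Sym_\infty$ if and only if its restriction to each $\Sym_n$ is positive-definite. The key step is then to identify these restrictions with the normalized finite characters constructed in Section \ref{Section_block}. A direct computation using $c(g)=n-\ell_n(g)$ gives, for $k\ge 1$,
$$
\sigma^\infty_{1/k}(g)=\Bigl(\tfrac1k\Bigr)^{n-\ell_n(g)}=\frac{k^{\ell_n(g)}}{k^n}=\frac{\sigma^n_k(g)}{\sigma^n_k(e)},
$$
and similarly, using $(-1)^{n-\ell_n(g)}=(-1)^{n+\ell_n(g)}$,
$$
\sigma^\infty_{-1/k}(g)=\Bigl(-\tfrac1k\Bigr)^{n-\ell_n(g)}=\frac{\widehat\sigma^n_k(g)}{\widehat\sigma^n_k(e)}.
$$
In each case the restriction to $\Sym_n$ is the normalization of a genuine finite character (the trace of $R^n_k$, respectively $\widehat R^n_k$, shown to be a block character in the first Proposition of Section \ref{Section_block}), and is therefore positive-definite.

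The remaining value $z=0$ is handled by recognizing $\sigma^\infty_0|_{\Sym_n}$, the indicator $\mathbf 1(g=e)$, as the normalized character $\chi^{\rm reg}(g)/\chi^{\rm reg}(e)$ of the regular representation of $\Sym_n$, which is positive-definite. Assembling the three cases, every restriction $\sigma^\infty_z|_{\Sym_n}$ is a normalized character of the finite group $\Sym_n$, so by the locality reduction $\sigma^\infty_z$ is a normalized block character of $\Sym_\infty$. I expect the only genuine subtlety to be the passage to the inductive limit --- namely confirming that positive-definiteness need only be tested on each finite $\Sym_n$ --- together with the sign bookkeeping in the $z=-1/k$ case; the algebraic identities with $\sigma^n_k$ and $\widehat\sigma^n_k$ themselves are routine once the explicit formulas $\sigma^n_k(g)=k^{\ell_n(g)}$ and $\widehat\sigma^n_k(g)=(-1)^n(-k)^{\ell_n(g)}$ are invoked.
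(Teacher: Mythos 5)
Your proposal is correct and follows essentially the same route as the paper: the authors likewise observe that $\sigma^\infty_{\pm 1/k}$ restricted to $\Sym_n$ equals $\sigma^n_k/\sigma^n_k(e)$ (respectively $\widehat\sigma^n_k/\widehat\sigma^n_k(e)$) and deduce positive-definiteness from that of the finite-$n$ characters, treating $z=0$ as trivial. Your write-up merely makes explicit the locality reduction (any finite collection of group elements lies in some $\Sym_n$) and the sign bookkeeping, both of which the paper leaves implicit.
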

\begin{proof}
 For $\sigma^{\infty}_0$ the statement is trivial. As for $\sigma^{\infty}_{\pm 1/k}$ they are,
 clearly, central normalized functions on $\Sym_\infty$. Note that
 \begin{equation}\label{compactif}
  \sigma^{\infty}_{1/k} (g)= \frac{\sigma^n_k(g)}{\sigma^n_k(e)}
\end{equation}
 for $g\in \Sym_n$, and similarly for $\sigma^{\infty}_{-1/k}$.
 Therefore, the positive-definiteness of $\sigma^n_k$ and $\widehat \sigma^n_k$
 implies the positive-definiteness of $\sigma^{\infty}_{\pm 1/k}$.
\end{proof}

Characters $\sigma_z^\infty$
can be associated
with certain
infinite-dimensional representations of $\Sym_\infty$.
 One way to establish the connection is to
 realize $\sigma_z^\infty$ as the trace of a finite von Neumann factor representation, which
may be seen as a substitute of irreducible representation, see \cite{Thoma1, Thoma2}. Another
approach is to consider spherical representations of the Gelfand pair $(\bar G, \bar K)$, where
$\bar G$ and $\bar K$ are certain extensions of the groups $\Sym_\infty\times \Sym_\infty$ and
$\Sym_\infty$, respectively, see \cite{Olsh1, Olsh2} and also \cite{KOV} for details and
references. By both approaches the character $\sigma^{\infty}_0$ corresponds to the regular
representation of $\Sym_\infty$.

It follows readily from a  multiplicative property of the extremes \cite{Thoma1} that the
characters $\sigma^\infty_z,~ z\in {\mathbb V}$, are in fact extreme points in the set of {\it
all} normalized characters of $\Sym_\infty$.
In Thoma's parametrization (see \cite{Thoma1} and \cite{VK,KOO,Ok}) of the extremes by two
nonincreasing sequences $(\alpha,\beta)$, $\sigma^{\infty}_{1/k}$ corresponds to
$\alpha_1=\dots=\alpha_k=1/k, ~\beta=0$, while $\widehat \sigma^{\infty}_{1/k}$ corresponds to the
pair $\alpha=0, ~\beta_1=\dots=\beta_k=1/k$.

Note that for finite $n$ the situation is different: as we saw, the representations with traces
$\sigma^{n}_k$ and $\widehat \sigma^{n}_k$ are reducible, so their normalized traces are not
extreme characters. A priori, there  is no reason that the list $\{\sigma^\infty_z, z\in{\mathbb
V}\}$ exhausts {\it all} extreme normalized block characters of $\Sym_\infty$. The following
theorem asserts that this is indeed the case.

\begin{theorem}
\label{theorem_classification_for_S_infty}
 Let $\Omega$ be the infinite-dimensional simplex of nonnegative sequences $(\gamma_z, z\in{\mathbb V})$
with $\sum_{1/z\in {\mathbb Z}\cup\{\infty\}}\gamma_z=1$.
 Endowed with the topology of point-wise convergence, the
set of normalized block characters of $\Sym_\infty$ is a Choquet simplex. The correspondence
$$(\gamma_z) \to \sum_{z\in{\mathbb V}} \gamma_z\sigma_z^\infty$$
is an affine homeomorphism between $\Omega$ and the Choquet simplex of normalized block characters
of $\Sym_\infty$.
In particular, the set of extreme normalized block characters of $\Sym_\infty$ is
 $\{\sigma^{\infty}_z,~ z\in {\mathbb V}\}$.
\end{theorem}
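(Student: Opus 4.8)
The plan is to identify the set of normalized block characters of $\Sym_\infty$ as a projective limit of the simplices of normalized block characters of the finite groups $\Sym_n$, and then to extract its extreme points. Recall from Corollary \ref{corollary_classification_for_S_n} that for each finite $n$ the normalized block characters of $\Sym_n$ form a simplex with vertices $\tau_k^n(\cdot)/\tau_k^n(e)$, $k=1,\dots,n$, and that the branching rule (Proposition \ref{proposition_branching_tau}) tells us exactly how these vertices restrict from $\Sym_n$ to $\Sym_{n-1}$. A normalized block character of $\Sym_\infty$ is, by definition, a compatible sequence of normalized block characters of the $\Sym_n$, so the space we want is the inverse limit $\plim K_n$, where $K_n$ is the finite-dimensional simplex for $\Sym_n$ and the bonding maps are the restriction maps ${\rm Res}_{n-1}$. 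I would first record that this inverse limit of simplices, in the topology of pointwise convergence, is a compact convex metrizable set, and that it is a Choquet simplex because each $K_n$ is a simplex and the bonding maps are affine (a standard fact about projective limits of simplices, which can be cited from \cite{Goodearl}).

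**Reducing extreme points to a boundary problem.**
The core of the argument is to show that the extreme points of $\plim K_n$ are exactly the $\sigma_z^\infty$, $z\in{\mathbb V}$. First I would verify that each $\sigma_z^\infty$ is extreme: this is already asserted in the excerpt to follow from the multiplicative property of Thoma's extreme characters \cite{Thoma1}, since the $\sigma_z^\infty$ are genuine extreme characters of the full group, hence a fortiori extreme within the smaller face of block characters. The harder direction is that there are no others. Here the branching rule is decisive. A point of the inverse limit is a sequence of probability vectors $(a_k^n)_{k=1}^n$ expressing the $n$-th component in the $\tau$-basis, subject to the compatibility forced by
$$
 {\rm Res}_{n-1} \tau_k^n = k\,\tau_k^{n-1} + (n-k+1)\,\tau_{k-1}^{n-1}
$$
after normalization. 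The plan is to analyze this compatibility as a \emph{coherent system}: normalizing by $\tau_k^n(e)=\EuNum{n}{k}$ turns the restriction relation into a forward transition kernel on the index $k$, and a coherent family corresponds to a harmonic (entrance-boundary) function for this kernel. Identifying the minimal boundary of this Markov-type chain is what pins down the extreme coherent sequences.

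**The main obstacle: computing the boundary.**
The step I expect to be the main obstacle is the explicit determination of the entrance boundary of the branching graph, i.e.\ showing that every extreme coherent family arises from a single $z\in{\mathbb V}$. Concretely, I would study the normalized ratios $\tau_k^n(g)/\tau_k^n(e)$ for a fixed permutation $g$ as $n\to\infty$ with $k=k(n)$ varying, and show that the only possible pointwise limits are the functions $z^{c(g)}$. The natural tool is the generating identity from Definition \ref{eq_def_tau} together with formula \eqref{eq_decomposition_Ewens}, which expresses $\theta^{\ell_n(g)}$ in the $\tau$-basis; reading off the coefficient asymptotics should show that, along any convergent sequence of vertices, the ratio of successive Eulerian weights forces $k/n$ (or $k$ itself, near the edges $k=O(1)$ or $n-k=O(1)$) to select a value $z=1/m$ or $z=-1/m$, with the interior regime $k\sim cn$ collapsing to $z=0$. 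I would then check that the resulting limit functions are precisely $\sigma_z^\infty$, that distinct $z$ give distinct characters (clear, since $c(g)$ takes all nonnegative integer values on $\Sym_\infty$), and that the parameter space is exactly ${\mathbb V}$, with $1/z\in{\mathbb Z}\cup\{\infty\}$ as in the statement of the theorem. Finally, uniqueness of the decomposition — the defining property of a Choquet simplex — follows automatically once the extreme set is identified and the inverse-limit simplex structure is in place, so the affine homeomorphism $\Omega\to\plim K_n$ sending $(\gamma_z)$ to $\sum_z \gamma_z\sigma_z^\infty$ is the desired conclusion.
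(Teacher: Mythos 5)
Your proposal follows the paper's route essentially verbatim: restrict to the finite simplices of Corollary \ref{corollary_classification_for_S_n}, use the branching rule of Proposition \ref{proposition_branching_tau} to turn compatibility of the coefficient arrays into a boundary problem for the Eulerian number triangle, invoke \cite{Goodearl} for the Choquet-simplex property of the resulting projective limit, and translate the extreme coherent families back into the characters $\sigma_z^\infty$ via Proposition \ref{sigma_tau} and its variants. The only divergence is at the step you correctly flag as the ``main obstacle'': the paper does not carry out the asymptotic determination of the entrance boundary but cites \cite{GO} for the explicit list of extreme solutions to the recursion, so your sketched computation of the limits of $\tau_k^n(g)/\tau_k^n(e)$ (together with the standard justification that every extreme coherent family arises as such a limit) is exactly the content of that reference and would need to be either completed or cited.
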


\begin{proof}

Let $\chi$ be a block character of $\Sym_\infty$ and let $\chi^n$ denote the
restriction of $\chi$ on the subgroup $\Sym_n$.  Then $\chi^n$ is a normalized block
character of $\Sym_n$, hence  by Proposition \ref{corollary_classification_for_S_n} it can be uniquely decomposed
as a convex combination as follows:
$$
 \chi^n(\cdot)=\sum_{k=1}^n a^n_k \frac{\tau^n_k(\cdot)}{\tau^n_k(e)},
$$
where the array of coefficients satisfies
\begin{equation}
\label{eq_cond_1}
 a^n_k\ge 0,\quad   \sum_{k=1}^n a^n_k=1.
\end{equation}
By Proposition \ref{proposition_branching_tau} the coefficients $a^n_k$ satisfy the following
backward recursion: for $1\leq k\leq n,~n=1,2,\dots$
\begin{equation}
\label{eq_cond_2}
 a^n_k = k \frac{\EuNum{n}{k}}{\EuNum{n+1}{k}} a^{n+1}_k + (n-k+1)\frac{\EuNum{n}{k}}{\EuNum{n+1}{k+1}}
 a^{n+1}_{k+1}.
\end{equation}
Moreover, the correspondence $\chi \leftrightarrow (a^n_k)$ is an affine homeomorphism between
normalized block characters and arrays $(a_k^n)$  satisfying \eqref{eq_cond_1} and
\eqref{eq_cond_2} (Condition \eqref{eq_cond_2} implies that in \eqref{eq_cond_1} it is enough to
require $a_1^1=1$).

The fact that the set of arrays $(a_k^n)$  satisfying \eqref{eq_cond_1} and \eqref{eq_cond_2} is a
Choquet simplex is just a particular case of a very general result, see e.g.\ \cite[Proposition
11.6]{Goodearl}.
As for the extreme points of this simplex, it was shown in \cite{GO} that all {\it extreme}
solutions to the recursion\eqref{eq_cond_2} subject to the constraints \eqref{eq_cond_1} are of
one of the types
\begin{eqnarray}
a_{k}^n&=&{{n+K-k\choose n}\over K^n}\EuNum{n}{k}, ~~~K=1,2,\dots \label{T1}\\
a_{k}^n&=&{{K+k-1\choose n}\over K^n}\EuNum{n}{k}, ~~~K=1,2,\dots \label{T2}\\
a_{k}^n&=&{1\over n!}\EuNum{n}{k}.\label{T3}
\end{eqnarray}
It is possible to write all three types as a single factorial formula (see \cite{GO}, Equation (2)).

Calculating with \eqref{T1} we arrive, in the view of Proposition  \ref{sigma_tau}, at
$$\sum_{k=1}^n a_{k}^n {\tau_k^n(\cdot)\over \tau_k^n(e)}= K^{-n}\sum_{k=1}^n{n+K-k\choose K-k}\tau_k^n(\cdot)=
\sum_{j=0}^{K-1}{n+j\choose j}\tau_{K-j}^n(\cdot)= {\sigma_K^n(\cdot)\over \sigma_K^n(e)},$$ which
means that the array \eqref{T1} corresponds to $\sigma_z^\infty$ with $1/z=K\in\{1,2,\dots\}$.
 Similarly, with \eqref{T2} and  \eqref{eq_decomp_of_hat_sigma} the convex combination is $\widehat\sigma_K^n$,
so we arrive at $\sigma_z^\infty$ with $1/z=-K\in\{-1,-2,\dots\}$.
 Finally, with the array \eqref{T3} the convex combination
 obtained is the normalized character of the regular representation of $\Sym_n$, as it follows from
\eqref{eq_decomposition_of_regular}, and this corresponds to $\sigma_0^\infty$. Thus all
extreme block characters of $\Sym_\infty$ have been identified with (\ref{sigma_z}).
\end{proof}

\section{Connection to the characters of the linear groups over the Galois fields}
\label{Section_GLnq}

The group $GL_\infty(q)$ is the group of infinite matrices of the kind
$$g=\left(\begin{array}{cc} h&0\\0&1_\infty \end{array}\right)$$
where $h$ is a finite square matrix with coefficients from the Galois field ${\mathbb F}_q$ with $q$ elements,  $1_\infty$ denotes the infinite unit matrix,
and $0$'s are zero matrices of suitable dimensions. From this definition
it is clear that the group has the structure of inductive limit
$GL_\infty(q)=\cup_{n\geq 1}GL_n(q)$.  Thoma \cite{ThomaGL} conjectured that all extreme normalized characters
of $GL_\infty(q)$ are of the form
\begin{equation}\label{ThSk}
\chi(g)= \epsilon(\det g) q^{-m\,c(g)}, ~~~m\in{\mathbb Z_{\ge 0}}\cup\{\infty\},
\end{equation}
where $\epsilon$ is a one-dimensional character of the cyclic group ${\mathbb F}_q^*$, and
$c(g)$ is the rank of the matrix $g-Id$.
The conjecture was proved
by Skudlarek \cite{Skudlarek}. Thoma's characters $g\mapsto q^{-m\,c(g)}$ belong to the class of
{\it derangement} characters introduced in \cite{Derangement} as the characters of $GL_n(q)$
(respectively,
$GL_\infty(q)$)  which only depend on $c(g)$.


As mentioned in the Introduction, a connection between block characters and derangement characters
is established via the natural embedding $\Sym_\infty \hookrightarrow GL_\infty(q)$ obtained by
writing a permutation as a permutation matrix. The decrement of permutation is equal to the rank
of the matrix $g-Id$. Furthermore, the determinant of the permutation matrix is $\pm 1$ depending
on the parity of the permutation.

Each extreme character (\ref{ThSk}) restricts from $GL_\infty(q)$ to $\Sym_\infty$ as one of the
characters  $\sigma^\infty_z$ with $z=\pm q^{-m}, ~m\in {\mathbb Z_{\ge 0}}\cup \{+\infty\}$,
unless $q$ is a power of 2. It $q$ is a power of 2, then $-1=1$ in ${\mathbb F}_q$, hence the
negative values are excluded. For other values of $z$  the function $g\mapsto z^{c(g)}$ is not
positive definite on $GL_\infty(q)$.

For $\Sym_n$ we constructed  $n$ extreme characters $\tau_k^n$ from the basic characters in just
one step, by differencing the series $\sigma_1^n,\dots,\sigma_n^n$. For  $GL_n(q)$ the situation
is more involved: one needs first to construct $q$-analogues of characters (\ref{psi}), then
proceed with further differencing and search for the extremes. Moreover, the set of normalized
derangement characters of $GL_n(q)$ is known to be a simplex for $n\in \{1,2,3,4,5,6,8,9,11,12\}$
and it is \emph{not} a simplex (has more than $n$ extremes) for $n\in\{7,10,13,14,15,16, 17, 18,
19, 20, 21,22\}$ (see \cite{Derangement}).
Another, more substantial, distinction occurs on the level of infinite groups. For $GL_\infty(q)$
the extreme normalized derangement characters exhaust all extreme normalized characters of the
group (up to tensoring with a linear character, and literally all for the special linear group
$SL_\infty(q)$). For $\Sym_\infty$ the extreme normalized block characters comprise only a
countable subset of the infinite-dimensional Thoma simplex of all extreme normalized characters.

\vskip0.3cm \noindent {\bf Acknowledgements} We are indebted to Grigori Olshanski and Anatoly
Vershik for valuable suggestions and comments. We thank anonymous referees for their comments, and
the  editor for pointing out the connection to the Foulkes characters.

The work of Vadim Gorin was partly supported by the University of Utrecht, by
``Dynasty'' foundation, by RFBR-CNRS grant 10-01-93114, by the program ``Development of the
scientific potential of the higher school'' and by IUM-Simons foundation scholarship.

\end{document}